\newtheorem{theorem}{Theorem}[section]
\newtheorem{lemma}[theorem]{Lemma}                                                                                                                                                                                                                                                                             
\newtheorem{prop}[theorem]{Proposition}
\newtheorem{notation}{Notation}[section]
\def\ep{\varepsilon}
\newcommand{\Om}{\Omega}
\newcommand{\Bx}{\mathbf{x}}
\newcommand{\Bn}{\mathbf{n}}
\newcommand{\RR}{\mathbb{R}}
\newcommand{\NN}{\mathbb{N}}
\newcommand{\p}{\partial}
\newcommand{\pd}[2]{\frac {\p #1}{\p #2}}
\newcommand{\ds}{\displaystyle}
\newcommand{\eqnref}[1]{(\ref {#1})}
\renewcommand{\qed}{\hfill $\Box$ \medskip}
\newcommand{\beq}{\begin{equation}}
\newcommand{\eeq}{\end{equation}}
\newcommand{\RN}[1]{%
  \textup{\uppercase\expandafter{\romannumeral#1}}%
}
\newcommand{\eps}{{\varepsilon}}
\newcommand{\RomanNumeralCaps}[1]
    {\MakeUppercase{\romannumeral #1}}
\numberwithin{equation}{section}
\numberwithin{figure}{section}
\begin{document}
\title{On the first Steklov--Dirichlet eigenvalue on eccentric annuli in general dimensions
%\title{An estimate of the first Steklov--Dirichlet eigenvalue on eccentric annuli: $(n+2)$ dimensions
 \thanks{\footnotesize
JH and ML are grateful for support from the National Research Foundation of Korea (NRF) grant funded by the Korean government (MSIT) (NRF-2021R1A2C1011804).
DS is supported by the National Research Foundation of Korea (NRF) grant funded by the Korea Government (MSIT) (No.
2021R1C1C2005144).}}

\date{}

\author{
Jiho Hong\thanks{Department of Mathematical Sciences, Korea Advanced Institute of Science and Technology, 291 Daehak-ro, Yuseong-gu, Daejeon 34141, Republic of Korea (jihohong@kaist.ac.kr, mklim@kaist.ac.kr).}\and
Mikyoung Lim\footnotemark[2]\and
Dong-Hwi Seo\thanks{Research Institute of Mathematics, Seoul National University, 1 Gwanak-ro, Gwanak-gu, Seoul 08826, Republic of Korea (donghwi.seo26@gmail.com).}
}

\maketitle

%\tableofcontents

\begin{abstract}
%In this paper, we obtain an inequality for the first Steklov--Dirichlet eigenvalue on eccentric annuli in $\mathbb{R}^{n+2}$. 
\noindent
We consider the Steklov--Dirichlet eigenvalue problem on eccentric annuli in Euclidean space of general dimensions. In recent work by the same authors of this paper \cite{HLS:2022:FSD}, a limiting behavior of the first eigenvalue, as the distance between the two boundary circles of an annulus approaches zero, was obtained in two dimensions. We extend this limiting behavior to general dimensions by employing bispherical coordinates and expressing the first eigenfunction as a Fourier--Gegenbauer series.

\end{abstract}

\noindent {\footnotesize {\emph{2020 Mathematics Subject Classification}.}  35P15, 49R05, 42C10.}

\noindent {\footnotesize {\bf Key words.} 
Steklov--Dirichlet eigenvalue, Eccentric annulus, Eigenvalue estimate, Bispherical coordinates, Fourier--Gegenbauer series}

%5C10   Series solutions to PDEs
%
%MSC2020
%35P15   Estimates of eigenvalues in context of PDEs
%35C20   Asymptotic expansions of solutions to PDEs
%49R05   Variational methods for eigenvalues of operators
%
%35J25   Boundary value problems for second-order elliptic equations
%65D Numerical approximation and computational geometry
%65D99  None of the above, but in this section

\section{Introduction}\label{sec1}

Let $\Omega\subset \mathbb{R}^d$ be a smooth bounded domain with two boundary components, $C_1$ and $C_2$. We consider an eigenvalue problem for the Laplacian operator with a mixed boundary condition:
\begin{align} 
\begin{cases} \label{eqn:Steklov-Dirichlet}
    \ds \Delta u = 0   &\ds\text{in   } \Omega,\\
  \ds   u = 0 & \ds\text{on   }  C_1,\\
  \ds   \frac{\partial u}{\partial \Bn} = \sigma u &\ds\text{on   } C_2,
\end{cases}
\end{align}
where $\Bn$ denotes the unit outward normal vector to $\p\Om$. If (\ref{eqn:Steklov-Dirichlet}) with a real constant $\sigma$ admits a non-trivial solution, we call $\sigma$ a Steklov--Dirichlet eigenvalue and $u$ the associated eigenfunction. There are only discrete eigenvalues $ 0<\sigma_1(\Omega)\le \sigma_2(\Omega) \le \cdots \rightarrow \infty,$ provided that $C_1\neq \emptyset$ (see, for example, \cite{Agranovich:2006:MPS}). 
When $C_1 = \emptyset$, the eigenvalue problem \eqnref{eqn:Steklov-Dirichlet} become the classical Steklov eigenvalue problem \cite{stekloff:1902:FPM}; we refer to the survey articles \cite{Girouard:2017:SGS,CGGS:2022:SRD} for details and more references on this topic. We are interested in the first Steklov--Dirichlet eigenvalue $\sigma_1(\Omega)$, which has the variational characterization as follows (see, for example, \cite{Bandle:1980:IIA}):
\begin{align} \label{variational characterization}
    \sigma_1(\Omega) = \inf \left\{ {\ds\int_{\Omega}\left|\nabla v\right|^2\, {\rm d}x} \,\Big|\, v \in H^1(\Omega),\ v=0 \textnormal{ on } C_1,\ \mbox{and }{\ds\int_{C_2}v^2 \,{\rm d}S=1}  \right\}.
\end{align}

The Steklov--Dirichlet eigenvalue problem admits various physical interpretations. For instance, Steklov--Dirichlet eigenfunctions can represent vibration modes of a partially free membrane fixed along $C_1$ with all its mass concentrated along $C_2$ \cite{Hersch:1968:EPI}. Also, the problem models the stationary heat distribution in $\Omega$ under the conditions that the temperature along $C_1$ is kept to zero and that the heat flux through $C_2$ is proportional to the temperature \cite{Banuelos:2010:EIM, KKKNPPS:2014:LVA}.

Many authors have been concerned with the geometric dependence of the Steklov--Dirichlet eigenvalues, which is the focus of this paper. In 1968, Hersch and Payne considered a Steklov--Dirichlet eigenvalue problem on bounded doubly connected domains in $\mathbb{R}^2$ and derived upper and lower bounds on the first eigenvalue \cite{Hersch:1968:EPI}. Dittmar obtained a formula for the reciprocal sum of eigenvalues for planar domains and induced an isoperimetric inequality \cite{Dittmar:1998:IIS}. Using conformal mapping theory, Dittmar and Solynin showed a lower bound of the first eigenvalue for a class of bounded doubly connected domains in $\mathbb{R}^2$ \cite{Dittmar:2003:MSE} (see also \cite{Dittmar:2005:EPC}). Ba\~{n}uelos et al. \cite{Banuelos:2010:EIM} compared the Steklov--Dirichlet and Steklov--Neumann eigenvalues for a class of domains in $\mathbb{R}^d$; this result is reminiscent of classical inequalities between the Dirichlet and Neumann eigenvalues.  Recently, there have been isoperimetric results \cite{VS:2020:EPL, Seo:2021:SOP, Ftouhi:2022:WPS, GPP:2022:IIF}, which will be discussed more later, spectral stability results \cite{PPS:2021:SRS, Mic:2022:SDS}, and estimates of the Riesz means of mixed Steklov eigenvalues \cite{HL:2020:EBM}.

In the present paper, we consider the first Steklov--Dirichlet eigenvalue on an eccentric annulus $\Om$ in $\RR^m$ ($m\geq 2$), where the inner and outer boundaries of $\Om$ are the spheres with radius $r_1$ and $r_2$, respectively. The two radii $0<r_1<r_2$ are fixed. We denote by $t$ the distance between the centers of $B_1^t$ and $B_2$.
 More precisely, we set $\Om=B_2\setminus\overline{B_1^t}$ with
$$B_1^t=B(t\mathbf{e}_1,r_1),\quad B_2=B(\mathbf{0},r_2)\quad\mbox{for }0\leq t<r_2-r_1,$$
where $B(\mathbf{x},r)$ means the ball centered at $\mathbf{x}$ with radius $r$ and $\mathbf{e}_1$ is the unit vector $(1,0,\dots,0)$. 
 Note that $\overline{B_1^t}\subset B_2$. For illustrations of $B_1, B_1^t$ and $B_2$, see Fig. \ref{fig:geometry}. By imposing the Dirichlet condition on $\p B_1^t$ and the Steklov condition on $\p B_2$ in \eqnref{eqn:Steklov-Dirichlet}, the first Steklov--Dirichlet eigenvalue and associated eigenfunction, $\sigma_1^t$ and $u_1^t$, respectively, satisfy
%Now, we consider a Steklov--Dirichlet eigenvalue problem on the eccentric annulus $\Om=B_2\setminus\overline{B_1^t}$ with the vanishing Dirichlet boundary condition on $\p B_1^t$ and the Steklov boundary condition on $\p B_2$. In other words,
\begin{align} \label{problem}
\begin{cases}
    \ds \Delta u_1^t = 0   &\ds\text{in   } B_2\setminus \overline{B_1^t},\\
  \ds   u_1^t = 0 & \ds\text{on   } \partial B_1^t,\\
  \ds   \frac{\partial u_1^t}{\partial \Bn} = \sigma_1^t u_1^t &\ds\text{on   } \partial B_2.
\end{cases}
\end{align}
Differentiability for $\sigma_1^t$ and $u_1^t$ in $t\in[0, r_2-r_1)$ and the shape derivative of $\sigma_1^t$ were obtained in \cite{HLS:2022:FSD}.

\begin{figure}[ht]
    \centering
    \begin{tikzpicture}

\coordinate  (X) at (0,0);
\coordinate (D) at (1.732,1);

\fill[gray!40,even odd rule] (X) circle (0.7) (X) circle (2);	

\draw (X) circle(2cm);
\draw (X) circle (0.7cm);
\draw (X) --(D);
\draw (X) --(-0.7,0);

\node[above, scale=0.8] at (0.866,0.5) {$r_2$};
\node[above, scale=0.8] at (-0.35,0) {$r_1$};
\node at (0,2.25){$B_2$};
\node at (0,0.95){$B_1$}; 
\node[below] at (0,0){$O$};
\foreach \point in {X}
	\fill [black] (\point) circle (1.5pt);

	\end{tikzpicture}
	\hskip 1.5cm
        \begin{tikzpicture}
\coordinate  (C) at (0,0);
\coordinate (D) at (2,0);
\coordinate (X) at (1,0);
\fill[gray!40,even odd rule] (X) circle (0.7) (C) circle (2);
\draw (C) circle(2cm);
\draw (X) circle (0.7cm);

\draw (X) --(C);
\node at (0.5,0.2) {$t$};

\node at (1,0.95){$B_1^t$};
\node at (0,2.25){$B_2$};
\node[below] at (0,0){$O$};

\foreach \point in {X,C}
	\fill [black] (\point) circle (1.5pt);

	\end{tikzpicture}
    \caption{A concentric annulus $B_2\setminus B_1$ (left) and an eccentric annulus $\Om=B_2\setminus\overline{B_1^t}$ (right).  The parameter $t$ means the distance between the centers of $B_1^t$ and $B_2$ and, thus, belongs to $[0, r_2-r_1)$.}  \label{fig:geometry}
\end{figure}
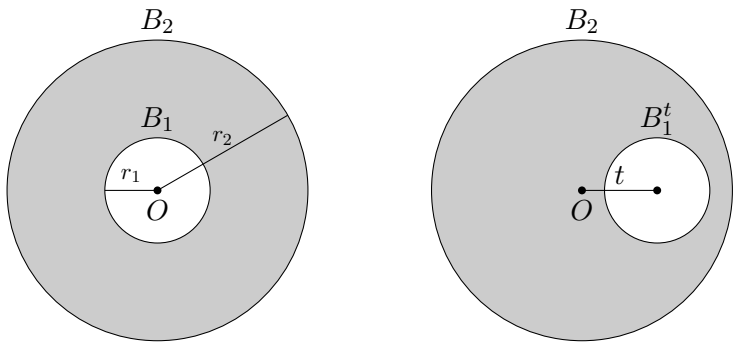

For the Steklov--Dirichlet eigenvalue problem \eqnref{problem}, Santhanam and Verma showed that the first eigenvalue $\sigma_1^t$ for $\Omega \subset \mathbb{R}^d, d\ge 3$ attains the maximum at $t=0$, that is, when $\Omega$ is the concentric annulus \cite{VS:2020:EPL}. Then, Seo and Ftouhi verified independently that the result of  Santhanman and Verma holds for $\mathbb{R}^2$ \cite{Seo:2021:SOP, Ftouhi:2022:WPS}. Furthermore, Seo  \cite{Seo:2021:SOP} generalized this isoperimetric result to two-point homogeneous space $M$, given that $r_2$ is less than the half of the injectivity radius of $M$, and Gavitone et al.  \cite{GPP:2022:IIF} to more general domains in Euclidean space. 

In \cite{HLS:2022:FSD}, the same authors of this paper obtained a lower bound for the limit inferior of $\sigma_1^t$ in $\RR^2$ as the distance between the boundary circles of the annulus approaches zero:
\beq\label{lower:liminf}
\liminf_{t\to (r_2-r_1)^-}\sigma_1^t\ge\frac{\ds r_1}{\ds2r_2(r_2-r_1)}\quad\mbox{for }\Om=B_2\setminus B_1^t\subset\RR^2.
\eeq
See also \cite{Dittmar:2003:MSE}.

The aim of this paper is to extend \eqnref{lower:liminf} to general dimensions.
There is a fundamental difficulty in deriving a lower bound for the first Steklov--Dirichlet eigenvalue as $\sigma_1^t$ is given by an infimum, as can be seen from the characterization \eqnref{variational characterization}.
We overcome the difficulty by employing bispherical coordinates in general dimensions and expressing the first eigenfunction $u_1^t$ as a Fourier--Gegenbauer series. By a careful asymptotic treatment of the series expression for $\eps=r_2-r_1-t\ll 1$, we derive the following. The proof will be provided in section \ref{sec:main proof}.
\begin{theorem}\label{thm:main}
For the Steklov--Dirichlet eigenvalue problem (\ref{problem}) on $\Om=B_2\setminus\overline{B_1^t}\subset \mathbb{R}^{n+2}, n\ge 1$, the first eigenvalue $\sigma_1^t$ satisfies 
\beq\label{ineq:sdeig:lowerbound}
\liminf_{t\to (r_2-r_1)^-}\sigma_1^t\ge\frac{(n+1)r_1-nr_2}{2r_2(r_2-r_1)}.\eeq
\end{theorem}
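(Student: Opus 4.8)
The plan is to pass to bispherical coordinates $(\tau,\theta,\omega)\in\mathbb{R}\times[0,\pi]\times S^{n}$ adapted to the two spheres, in which $\mathbb{R}^{n+2}\setminus\{0\}$ is conformal to the cylinder $\mathbb{R}\times S^{n+1}$ with conformal factor $\Omega=a/(\cosh\tau-\cos\theta)$ for a scale $a>0$, and $\p B_1^t$, $\p B_2$ become the level sets $\tau=\tau_1$, $\tau=\tau_2$ with $\tau_1<\tau_2<0$. By the conformal covariance of the Laplacian (R-separation on the cylinder), every harmonic function admits the expansion
\[
u=(\cosh\tau-\cos\theta)^{n/2}\sum_{k\ge0}f_k(\tau)\,C_k^{n/2}(\cos\theta),\qquad \nu_k:=k+\tfrac n2,
\]
where $C_k^{n/2}$ are the zonal harmonics of $S^{n+1}$ and $f_k''=\nu_k^2 f_k$; the non-axisymmetric modes carry higher $S^{n+1}$-harmonics and, since they only increase the Dirichlet energy, do not affect the first eigenvalue, so I restrict to this axisymmetric family. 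The Dirichlet condition $u|_{\tau=\tau_1}=0$ together with completeness of $\{C_k^{n/2}\}$ forces $f_k(\tau)=c_k\sinh(\nu_k(\tau-\tau_1))$, so an admissible harmonic $u$ is encoded by the sequence $\{c_k\}$.

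First I would set up the geometric dictionary. The relations $a/|\sinh\tau_i|=r_i$ and $a(\coth\tau_1-\coth\tau_2)=t$ pin down $a,\tau_1,\tau_2$, and expanding as $\eps:=r_2-r_1-t\to0^+$ gives
\[
a\sim\sqrt{\tfrac{2r_1r_2}{r_2-r_1}\,\eps},\qquad |\tau_i|\sim\tfrac a{r_i},\qquad \tau_2-\tau_1\sim a\,\tfrac{r_2-r_1}{r_1r_2},
\]
so that $a,|\tau_1|,|\tau_2|,\tau_2-\tau_1$ are all of order $\sqrt\eps$; the Fourier modes that matter will be those with $k\sim1/a\sim1/\sqrt\eps$.

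The key step is to evaluate the Rayleigh quotient, using $\sigma_1^t=\inf\{\int_\Om|\nabla v|^2/\int_{\p B_2}v^2\}$ over harmonic admissible $v$. Writing $b_k:=c_k\sinh(\nu_k(\tau_2-\tau_1))$, $g(\theta):=\sum_k b_k C_k^{n/2}(\cos\theta)$, and using $\p_n=a^{-1}(\cosh\tau_2-\cos\theta)\p_\tau$ on $\p B_2$ together with the surface measure and the trivial $d\omega$-integration, a direct computation gives the exact identity
\[
\sigma_1^t=-\frac{na}{2r_2}+\inf_{\{b_k\}}\frac{N(b)}{I_0(b)},\quad N(b)=\sum_k\nu_k\coth\!\big(\nu_k(\tau_2-\tau_1)\big)\gamma_k b_k^2,\quad I_0(b)=\int_0^\pi\frac{g(\theta)^2}{\cosh\tau_2-\cos\theta}\,\sin^n\theta\,d\theta,
\]
with $\gamma_k=\int_0^\pi(C_k^{n/2})^2\sin^n\theta\,d\theta$: the numerator $N$ is diagonal by Gegenbauer orthogonality (Parseval), while the singular weight $(\cosh\tau_2-\cos\theta)^{-1}$ puts all the mode coupling into $I_0$. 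Since $na/(2r_2)\to0$, the theorem reduces to the asymptotic lower bound $\liminf_{\eps\to0}\inf_b N(b)/I_0(b)\ge\frac{r_1}{2r_2(r_2-r_1)}-\frac n{2r_2}$, equivalently to a weighted trace inequality $I_0(b)\le\Lambda^{-1}N(b)$ with a sharp constant $\Lambda$.

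The main obstacle is pinning down this sharp constant. I would analyze the forms in the concentration regime $\theta=O(a)$, $k=O(1/a)$, where $\cosh\tau_2-\cos\theta\approx\frac12(\theta^2+a^2/r_2^2)$, the weight $\nu_k\coth(\nu_k(\tau_2-\tau_1))$ interpolates between $1/(\tau_2-\tau_1)$ for small $k$ and $\nu_k$ for large $k$, and the Mehler--Heine asymptotics replace $C_k^{n/2}(\cos\theta)$ by Bessel functions of order $(n-1)/2$. Under the rescaling, $N$ becomes a Riemann sum converging to a Hankel-transform integral and $I_0$ its weighted analogue, so $\inf_b N/I_0$ should converge to the bottom of an explicit one-dimensional Rayleigh quotient whose value I expect to equal $\frac{r_1}{2r_2(r_2-r_1)}-\frac n{2r_2}=\frac{(n+1)r_1-nr_2}{2r_2(r_2-r_1)}$. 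The delicate points are turning this continuum limit into a genuine lower bound rather than a formal leading order --- this needs uniform control of the Gegenbauer-to-Bessel asymptotics over all $k$ and of the tails ($\theta=O(1)$ and bounded $k$) that drop out in the limit --- and verifying the sharp constant. Note finally that for large $n$ the asserted bound is negative and the statement is vacuous, since $N/I_0\ge0$; the genuine analysis is only needed in the range of $n$ where the right-hand side is positive.
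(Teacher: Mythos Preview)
Your route---Rayleigh quotient on the cylinder, then a Mehler--Heine/Bessel continuum limit---is genuinely different from the paper's. The paper never touches the quadratic forms $N$, $I_0$; instead it works with the \emph{actual} first eigenfunction (shown axisymmetric via simplicity and positivity and an $S^n$-averaging, not via an energy-monotonicity claim), expands it as $(\cosh\xi-\cos\theta)^{n/2}\sum_m\widetilde C_m\,G_m^{(n/2)}(\cos\theta)$ on $\p B_2$, and imposes the Steklov condition mode by mode. Because $\cos\theta\cdot G_m^{(n/2)}$ is a combination of $G_{m\pm1}^{(n/2)}$, this yields a three-term recurrence for $\widetilde C_m$, equivalently $R_{m+1}=-\frac{m}{m+n}R_m^{-1}+S_m$ for the ratios $R_m$. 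The proof is then a short discrete dynamical-systems argument: convergence of the series forces $R_m\to e^{-\xi_2}<1$, whereas if the claimed bound failed along some $\eps_j\to0$ one checks from elementary estimates on $S_m$ that $R_m(\eps_j)>1$ for all $m$, a contradiction. No Bessel asymptotics, no sharp-constant computation, no tightness argument for minimizers.

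By contrast, the two difficulties you flag are real and unresolved in your proposal. First, your ``exact identity'' is off by a factor $1/\alpha$: the correct formula is $\sigma_1^t=-\tfrac{n\sinh\xi_2}{2\alpha}+\tfrac{1}{\alpha}\inf_b N(b)/I_0(b)$, so the first term tends to $-n/(2r_2)$ (not to $0$) and it is $\alpha^{-1}\inf_b N/I_0$, not $\inf_b N/I_0$ itself, that has a nontrivial limit; your rescaling heuristics are compatible with this once the powers of $a$ are tracked, but as written the target is wrong. Second, and centrally, identifying the limiting one-dimensional problem and proving its bottom equals $\tfrac{r_1}{2r_2(r_2-r_1)}$ is only asserted (``I expect''): even formally this couples a Hankel transform to the weights $\coth\big(\kappa(r_2-r_1)/(r_1r_2)\big)$ and $(\phi^2+r_2^{-2})^{-1}$, and turning the formal limit into a rigorous \emph{lower} bound for the infimum requires showing that minimizing sequences cannot escape the window $k\sim1/a$, $\theta\sim a$. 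The paper's recursion argument bypasses both issues at the price of being less conceptual: it delivers the $\liminf$ bound directly without ever exhibiting a limiting variational problem.
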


Note that various eigenvalue problems with Dirichlet boundary conditions have been extensively studied on eccentric annuli. These problems include the Dirichlet Laplacian problem \cite{RS:1998:IME, HKK:2001:POO, AA:2005:TFL, AV:2013:TFL, ABS:2018:SMF, AR:2019:FDE}, the Dirichlet $p$-Laplacian problem \cite{CM:2015:EOP, ABS:2018:SMF}, the Dirichlet fractional Laplacian problem \cite{DFW:2021:FHF}, and the Zaremba problem \cite{AAK:2021:SVR}. For these eigenvalue problems on an eccentric annulus, the first eigenvalue monotonically decreases as the distance between the two boundary spheres increases. This behavior is similar to that observed for the Dirichlet heat trace \cite{EH:2016:POO} and the Dirichlet heat content \cite{Li:2022:POO}.

The remainder of this paper is organized as follows. In section 2, we introduce the bispherical coordinates in general dimensions and the Gegenbauer polynomials. Section 3 is devoted to deriving a Fourier--Gegenbauer series expansion for the first Steklov--Dirichlet eigenfunction by using the bispherical coordinates. In section 4, we investigate the asymptotic behavior for the expansion coefficients of the first eigenfunction and prove the main theorem.

\section{Preliminaries}
\subsection{Bispherical coordinates} \label{sec: the first: bispherical coordinates}
Let $\alpha>0$. Any point $\mathbf{x}=(x_1,x_2)\in\RR^2$ in the Cartesian coordinates admits the bipolar coordinates $(\xi,\theta)\in\RR\times[0,2\pi)$ via the relation
%$
%x_j=\operatorname{Bi}_{2,j}(\xi,\theta)$, $j=1,2$,
%with 
\begin{align*}
x_1&=\frac{\alpha\sinh\xi}{\cosh\xi-\cos\theta}=:\operatorname{Bi}_{2,1}(\xi,\theta), \\
x_2&=\frac{\alpha\sin\theta}{\cosh\xi-\cos\theta}=:\operatorname{Bi}_{2,2}(\xi,\theta),
\end{align*}
where the poles are located at $\alpha\mathbf{e}_1$. We also write $\Bx=\Bx(\xi,\theta)$ to indicate the dependence of $\Bx$ on $(\xi,\theta)$. 
Similarly, the bispherical coordinates for $\mathbf{x}=(x_1,x_2,x_3)\in\RR^3$ are defined by 
\begin{align*}
x_1&=\frac{\alpha\sinh\xi}{\cosh\xi-\cos\theta}=:\operatorname{Bi}_{3,1}(\xi,\theta,\varphi_1),\\
x_2&=\frac{\alpha\sin\theta\cos\varphi_1}{\cosh\xi-\cos\theta}=:\operatorname{Bi}_{3,2}(\xi,\theta,\varphi_1),\\
x_3&=\frac{\alpha\sin\theta\sin\varphi_1}{\cosh\xi-\cos\theta}=:\operatorname{Bi}_{3,3}(\xi,\theta,\varphi_1).
\end{align*}

%We now generalize the bipolar coordinates to higher dimensions by the bispherical coordinates. 
We generalize the bispherical coordinates to $\RR^{n+2}$, $n\geq1$, by the mapping
$\mathbf{x}=\mathbf{x}(\xi,\theta,\varphi_1,\dots \varphi_{n}): \RR\times[0,\pi]^{n}\times[0,2\pi)\rightarrow \RR^{n+2}$ whose components are given by
\beq \label{bihyperspherical coordinates}
x_j=\operatorname{Bi}_{n+2,j}\left(\xi,\theta,\varphi_1,\dots \varphi_{n}\right)\quad\mbox{for each } j=1,\dots,n+2,
\eeq
where the functions $\operatorname{Bi}_{n+2,j}$ are recursively defined by
\beq\notag
\begin{aligned}
\operatorname{Bi}_{n+2,j}\left(\xi,\theta,\varphi_1,\dots \varphi_{n}\right)
&=\operatorname{Bi}_{n+1,j}\left(\xi,\theta,\varphi_1,\dots \varphi_{n-1}\right)\qquad \mbox{for }j=1,\dots,n,\\
\operatorname{Bi}_{n+2,n+1}\left(\xi,\theta,\varphi_1,\dots \varphi_{n}\right)
&=\operatorname{Bi}_{n+1,n+1}\left(\xi,\theta,\varphi_1,\dots \varphi_{n-1}\right) \cos\varphi_n,\\
\operatorname{Bi}_{n+2,n+2}\left(\xi,\theta,\varphi_1,\dots \varphi_{n}\right)
&=\operatorname{Bi}_{n+1,n+1}\left(\xi,\theta,\varphi_1,\dots \varphi_{n-1}\right)\sin\varphi_n.
\end{aligned}
\eeq
For instance, it holds for $n=2$ that
\begin{align*}
%\operatorname{Bi}_{3,1}&=\frac{\alpha\sinh\xi}{\cosh\xi-\cos\theta},\ \operatorname{Bi}_{3,2}=\frac{\alpha\sin\theta\cos\varphi_1}{\cosh\xi-\cos\theta},\ \operatorname{Bi}_{3,3}=\frac{\alpha\sin\theta\sin\varphi_1}{\cosh\xi-\cos\theta}\quad\mbox{for }n=1;\\
\operatorname{Bi}_{4,1}(\xi,\theta,\varphi_1,\varphi_2)&=\frac{\alpha\sinh\xi}{\cosh\xi-\cos\theta},\quad \operatorname{Bi}_{4,2}(\xi,\theta,\varphi_1,\varphi_2)=\frac{\alpha\sin\theta\cos\varphi_1}{\cosh\xi-\cos\theta},\\
\operatorname{Bi}_{4,3}(\xi,\theta,\varphi_1,\varphi_2)&=\frac{\alpha\sin\theta\sin\varphi_1\cos\varphi_2}{\cosh\xi-\cos\theta},
\quad \operatorname{Bi}_{4,4}(\xi,\theta,\varphi_1,\varphi_2)=\frac{\alpha\sin\theta\sin\varphi_1\sin\varphi_2}{\cosh\xi-\cos\theta}.
\end{align*}
See Fig. \ref{fig:coord:3d} for level surfaces of the bispherical coordinates.

\begin{figure}[ht!]
\begin{center}

\begin{tikzpicture}[scale=0.43]

\coordinate  (C) at (3.537742925, 0);
\coordinate (X) at (2.125, 0);
\fill[gray!40,even odd rule] (X) circle (1) (C) circle (3);

\node at (2,1.37){$B_1^t$};
\node at (3.537742925,3.35){$B_2$};

\draw[thick] (3.537742925, 0) circle (3);
\draw[thick] (2.125, 0) circle (1);
\draw[thick] (-3.537742925, 0) circle (3);
\draw[thick] (-2.125, 0) circle (1);

\draw (0, -6.0) -- (0, 6.0);
\draw (-9.27, 0) -- (9.27, 0);

\fill (0, 6.0) -- (-0.1, 5.8) -- (0.1, 5.8);
\fill (9.27, 0) -- (9.07, 0.1) -- (9.07, -0.1);
\draw (9.27, -0.5) node {$x_1$};
\draw (0.2, 6.3) node {$x_2,\dots, x_{n+2}$};
\draw (-0.3, -0.35) node {$O$};

\draw[dashed, domain=30:150] plot ({2.1650635*cos(\x)},{-1.08253175+2.1650635*sin(\x)});
\draw[dashed, domain=-150:-30] plot ({2.1650635*cos(\x)},{1.08253175+2.1650635*sin(\x)});
\draw[dashed, domain=-40:220] plot ({0+2.40117*cos(\x)}, {1.5+2.40117*sin(\x)});
\draw[dashed, domain=140:400] plot ({0+2.40117*cos(\x)}, {-1.5+2.40117*sin(\x)});
\draw [dashed, domain=-5:70] plot ({5.34*cos(\x)}, {5.34*sin(\x)-5});
\draw [dashed, domain=175:250] plot ({5.34*cos(\x)}, {5.34*sin(\x)+5});
\draw [dashed, domain=110:185] plot ({5.34*cos(\x)}, {5.34*sin(\x)-5});
\draw [dashed, domain=290:365] plot ({5.34*cos(\x)}, {5.34*sin(\x)+5});
%\draw[x=0.4cm,y=0.96cm, line width=.2ex, -stealth] (19,0.5) arc (150:-150:1 and 1);

\fill (1.875, 0) circle (0.07);
\fill (-1.875, 0) circle (0.07);
\fill (0, 0) circle (0.07);

\draw (5, 0.46) node {$\xi=\xi_1$};
\draw (8.1, 1.45) node {$\xi=\xi_2$};

\draw (3.1098, 0.173648) -- (3.7098, 0.4);
\draw (6.29006, 1.25) -- (6.89006, 1.45);

\fill (3.1098, 0.173648) -- (3.1098+0.156693501419659, 0.173648+0.159521618011000) -- (3.1098+0.221633395260595, 0.173648-0.0296418303291268);
\fill (6.29006, 1.25) -- (6.29006+0.156693501419659, 1.25+0.159521618011000) -- (6.29006+0.221633395260595, 1.25-0.0296418303291268);

\end{tikzpicture}

\end{center}
\vskip -4mm
\caption{\label{fig:coord:3d}$\xi$-level surfaces (thick curves) and $\theta$-level surfaces (dashed curves) of the bispherical coordinate system in $\mathbb{R}^{n+2}$.}
\end{figure}
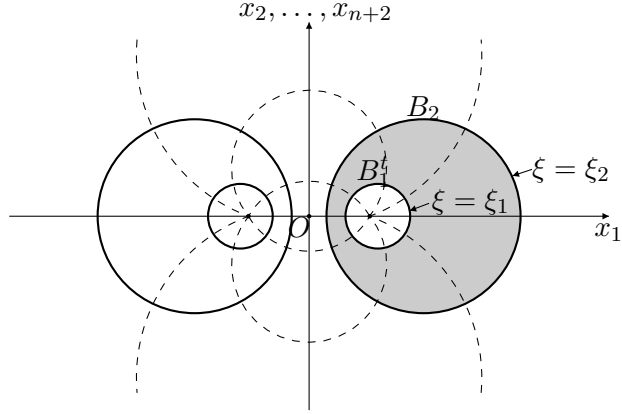

For a function $u$, the outward normal derivative on $\xi=\tilde{\xi}$ for a fixed $\tilde{\xi}>0$ satisfies
\beq\label{normal:u:3d}
\pd{u}{\mathbf{n}}=-\frac{1}{h({\xi},\theta)}\pd{u}{\xi}\Big|_{\xi=\tilde{\xi}}\quad\mbox{with } h(\xi,\theta) = \frac{\alpha}{\cosh\xi-\cos\theta}.
\eeq
Here, $h(\xi,\theta)$ is the scale factor of the bispherical coordinate system for the parameter $\xi$.

Now, we express $\Delta u$ in bispherical coordinates (see \eqnref{bihyperspherical coordinates}) for a given function $u\in C^{\infty}(\mathbb{R}^{n+2})$. For simplicity, we introduce the notation $\mathbf{y}=(y_1,y_2,y_3,\dots,y_{n+2})=(\xi,\theta,\varphi_1,\dots,\varphi_n)$. For $\mathbf{x}=(x_1,\dots,x_{n+2})$ in Cartesian coordinates, we define
$$g_{ij}:=\left\langle \pd{\mathbf{x}}{y_i},\, \pd{\mathbf{x}}{y_j}\right\rangle_{\RR^{n+2}}\quad \mbox{for }i,j=1,\dots,n+2.$$
It holds that $g_{ij}=0$ for $i\neq j$ and 
\begin{align*}
    g_{11}&=\frac{\alpha^2}{(\cosh\xi-\cos\theta)^2},\quad g_{22}=\frac{\alpha^2}{(\cosh\xi-\cos\theta)^2},\\
    g_{33}&=\frac{\alpha^2\sin^2\theta}{(\cosh\xi-\cos\theta)^2},\quad
g_{k+2,k+2}=\frac{\alpha^2\sin^2\theta\sin^2\varphi_1\cdots \sin^2\varphi_{k-1}}{(\cosh\xi-\cos\theta)^2}\quad \mbox{for }k=2,\dots,n,
\end{align*}
and, thus, $$\sqrt{|\mathbf{g}|}:=\sqrt{\det (g_{ij})}=\frac{\alpha^{n+2}\sin^{n}\theta\sin^{n-1}\varphi_1\cdots\sin^2\varphi_{n-2}\sin\varphi_{n-1}}{(\cosh\xi-\cos\theta)^{n+2}}.$$
Let $g^{ij}$ be the components of the inverse of the metric operator $(g_{ij})$, which is diagonal. We have $g^{jj}=g_{jj}^{-1}$. Then, the formula for the Laplace--Beltrami operator leads to
\begin{align}\notag
\Delta u
&=\frac{1}{\sqrt{|\mathbf{g}|}}\pd{}{y_i}\left(\sqrt{|\mathbf{g}|} g^{ij}\pd{u}{y_j}\right)\\\notag
 &=
    \frac{1}{\sqrt{|\mathbf{g}|}}\bigg[\frac{\partial}{\partial \xi}\left(\frac{\alpha^{n}\sin^{n}\theta \sin^{n-1}\varphi_1\cdots \sin\varphi_{n-1}}{(\cosh\xi-\cos\theta)^{n}}\frac{\partial u}{\partial \xi} \right)
 +\frac{\partial}{\partial \theta}\left(\frac{\alpha^{n}\sin^{n}\theta \sin^{n-1}\varphi_1\cdots \sin\varphi_{n-1}}{(\cosh\xi-\cos\theta)^{n}}\frac{\partial u}{\partial \theta} \right)\\ \label{laplacian}
   &\qquad\qquad +\sum_{i=1}^{n}\frac{\partial}{\partial \varphi_i}\left(\sqrt{|\mathbf{g}|}g^{i+2, i+2}\frac{\partial u}{\partial \varphi_i} \right)\bigg]. 
\end{align}
The bispherical coordinates allow $R$-separation of the Laplace equation so that a function $v$ of the form
$$v=(\cosh\xi-\cos\theta)^{\frac{n}{2}}\cdot\Xi(\xi)\Theta(\theta)\Psi_1(\varphi_1)\cdots\Psi_{n}(\varphi_{n})$$ permits the separation of the equation $\Delta v=0$ into $n+2$ number of ordinary differential equations, which can be derived from \eqnref{laplacian}; refer to \cite[Section \RomanNumeralCaps{4}]{Moon:1988:FTH} for the case $n=1$.

\subsection{Gegenbauer polynomials}
The Gegenbauer polynomials, also called ultraspherical polynomials, $G_m^{(\lambda)}(s)$, $s\in[-1, 1]$ with $m\in\NN \cup \{0\}$, $\lambda\in (0,\infty)$ are given by the generating relation (see, for instance, (4.7.23) in \cite{Sze:1975:OP})
$$(1-2st+t^2)^{-\lambda}=\sum_{m=0}^\infty G_{m}^{(\lambda)}(s)\,t^m$$
for $s\in (-1,1)$ and $t\in[-1, 1]$. In some literature, they are denoted by $P_n^{(\lambda)}$ (for instance, \cite{Sze:1975:OP, SW:1971:IFA}).  If $\lambda=\frac{1}{2}$, $\left\{G_m^{\left(1/2\right)}(s)\right\}_{m\ge 0}$ for $s\in [-1, 1]$ become the Legendre polynomials. More generally, $\left\{G_m^{\left(n/2\right)}(s)\right\}_{m\ge 0}$ are related to the zonal spherical harmonics in $\mathbb{R}^{n+2}$ \cite[Theorem 2.14]{SW:1971:IFA}.
We list some properties of the Gegenbauer polynomials.
\begin{itemize}
    \item We have the recurrence relation (see (4.7.17) in \cite{Sze:1975:OP}):
    \begin{equation}\label{Gegen:recur}
\begin{gathered}
        G_0^{(\lambda)}(s)=1, \quad G_1^{\lambda}(s)=2s\lambda,\\
    mG_m^{(\lambda)}(s)-2(m+\lambda-1)sG_{m-1}^{(\lambda)}(s)+(m+2\lambda-2)G_{m-2}^{(\lambda)}(s)=0 \text{ for all } m\ge 2.    
    \end{gathered}
    \end{equation}
     
    \item The derivatives are computed as what follows (see (4.7.14) in \cite{Sze:1975:OP}):
    \begin{align} \label{gegen:deri}
    \frac{d}{ds}G_m^{(\lambda)}(s)=2\lambda G_{m-1}^{(\lambda+1)}(s) \text{ for all } m\ge 1.
    \end{align}
    \item $G_{m}^{(\lambda)}(s)$ is a polynomial of degree $m$ and it is orthognoal to all polynomials of degree at most $m-1$ in the weighted space $L^2([-1,1];(1-s^2)^{\lambda-1/2}\,ds)$. In particular, $\left\{G_{m}^{(\lambda)}(s)\right\}_{m\ge 0}$ is complete and orthogonal in $L^2([-1,1];(1-s^2)^{\lambda-1/2}\,ds)$ \cite[Corollary IV 2.17]{SW:1971:IFA}.
    \item We have the following differential equation for $y(s)=G_m^{(\lambda)}(s)$ (see (4.7.5) in \cite{Sze:1975:OP}):
    \begin{align}\label{Gegen:ode}
        (1-s^2)y''-(2\lambda+1)sy'+m(m+2\lambda)y=0.
    \end{align}
    Equivalently, it holds that
     \begin{align}\label{Gegen:ode2}
        \left((1-s^2)^{\lambda+\frac{1}{2}}y'\right)'+m(m+2\lambda)(1-s^2)^{\lambda-\frac{1}{2}}y=0.
    \end{align}
    \item We have the following Rodrigues type formula (see (4.7.12) in \cite{Sze:1975:OP}):
    \begin{align}\label{expan:gegen}
     (1-s^2)^{\lambda-\frac{1}{2}}G_m^{(\lambda)}(s)=\frac{(-2)^m}{m!}\frac{\Gamma(m+\lambda)}{\Gamma(\lambda)}\frac{\Gamma(m+2\lambda)}{\Gamma(2m+2\lambda)}\frac{d^m}{ds^m}(1-s^2)^{m+\lambda-\frac{1}{2}}.   
    \end{align}
   \item The maximum modulus of $G_{m}^{(\lambda)}(s)$ happens at $s=\pm 1$ and
   \begin{align}\label{gegen_maximum}
   \left|G_m^{(\lambda)}(s) \right|\le \left|G_m^{(\lambda)}(1) \right|=\frac{\Gamma(m+2\lambda)}{\Gamma(m+1)\Gamma(2\lambda)}.
   \end{align}
   See (4.7.3) in \cite{Sze:1975:OP} and \cite[Theorem 7.33.1]{Sze:1975:OP}. 
   \end{itemize}
Note that (\ref{gegen_maximum}) implies
\begin{align} \label{maximum bound}
    \left|G_m^{(\lambda)}(s) \right|\le Cm^k
\end{align}
for some constants $C=C(\lambda)>0$ and $k=k(\lambda)>0$.
Using the Rodrigues type formula or (4.7.15) in \cite{Sze:1975:OP}, we have
\begin{align}
\begin{split} \label{Gegen:norm}
\|G_m^{(\lambda)}\|^2_{\lambda-\frac{1}{2}}&:=\int_{-1}^1 \left(G_m^{(\lambda)}(s)\right)^2 (1-s^2)^{\lambda -\frac{1}{2}}\,ds\\
&=2^{1-2\lambda}\pi\frac{(\Gamma(\lambda))^{-2}\Gamma(m+2\lambda)}{(m+\lambda)\Gamma(m+1)}
\end{split}\\
&\le Cm^{k}
\end{align}
for some constants $C=C(\lambda)>0$ and $k=k(\lambda)>0$.

\section{The first eigenfunction $u_1^t$ in bishperical coordinates} \label{sec:first eigenfunction}

We choose the parameter $\alpha$ in the bispherical coordinates as \beq\label{def:alpha}
\alpha=\frac{\sqrt{\left((r_2+r_1)^2-t^2\right)\left((r_2-r_1)^2-t^2\right)}}{2t};
\eeq 
then, after appropriately applying the rotation and translation, the annulus (again, denoted by $\Om$) becomes $\Om=B_2\setminus\overline{B_1^t}$ with 
\beq\label{eqn:B1B2:2}
B_1^t=t_0\mathbf{e}_1+B(-t\mathbf{e}_1,r_1),\quad B_2=t_0\mathbf{e}_1+B(0,r_2)\quad\mbox{for some }t_0>0
\eeq
where
$\p B_1^t$ and $\p B_2$ are the $\xi$-level curves of $\xi_1$ and $\xi_2$, respectively, with (see Fig. \ref{fig:coord:3d})
\beq\label{def:xi_j}
\xi_j=\ln\bigg(\frac{\alpha}{r_j}+\sqrt{\left(\frac{\alpha}{r_j}\right)^2+1}\,\bigg),\ j=1,2.
\eeq 
It holds that $0<\xi_2<\xi_1$ and that the interior of $\Om$ corresponds to the rectangular region $\xi_2<\xi<\xi_1$.

We have the following properties for the first eigenvalue and eigenfunction. 
\begin{lemma}[\cite{HLS:2022:FSD}]\label{lemm:positive}
 The first eigenvalue $\sigma_1^t$ is simple and the first eigenfunction $u_1^t$ does not change the sign in $B_2\setminus \overline{B_1^t}$.
\end{lemma}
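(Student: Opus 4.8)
The plan is to prove the two assertions of Lemma \ref{lemm:positive}---simplicity of $\sigma_1^t$ and non-vanishing sign of $u_1^t$---by the standard variational argument for a first eigenvalue, adapted to the mixed Steklov--Dirichlet boundary condition. The key input is the variational characterization \eqnref{variational characterization}, which expresses $\sigma_1^t$ as the infimum of the Rayleigh quotient $\int_\Om|\nabla v|^2\,dx$ over admissible $v\in H^1(\Om)$ with $v=0$ on $\p B_1^t$ and $\int_{\p B_2}v^2\,dS=1$.

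I would first establish that a minimizer can be taken to be non-negative. Given any first eigenfunction $u_1^t$, the function $|u_1^t|$ lies in $H^1(\Om)$, still vanishes on $\p B_1^t$, satisfies the same normalization $\int_{\p B_2}|u_1^t|^2\,dS=1$, and has the identical Dirichlet energy since $|\nabla |u_1^t||=|\nabla u_1^t|$ a.e. Hence $|u_1^t|$ is also a minimizer, and therefore a weak solution of \eqnref{problem}. By elliptic regularity $|u_1^t|$ is smooth in the open annulus and harmonic there. The crucial step is then to upgrade non-negativity to strict positivity: since $|u_1^t|\ge 0$ is harmonic and not identically zero, the strong maximum principle forces $|u_1^t|>0$ throughout the interior $B_2\setminus\overline{B_1^t}$. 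Consequently the original eigenfunction $u_1^t$, being $\pm|u_1^t|$ on the connected interior, cannot change sign; this gives the second claim. Here I should note that $u_1^t$ does vanish on the inner boundary $\p B_1^t$ by the Dirichlet condition, so the non-vanishing statement must be read in the open domain $B_2\setminus\overline{B_1^t}$, consistent with the lemma as stated.

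For simplicity, suppose $u$ and $w$ are two linearly independent first eigenfunctions. By the argument above, each of $|u|$ and $|w|$ is strictly positive in the interior, so after replacing $u,w$ by $\pm u,\pm w$ we may assume $u>0$ and $w>0$ in $B_2\setminus\overline{B_1^t}$. Since the eigenspace is a linear space, for a suitable real constant $c$ the combination $u-cw$ is again an eigenfunction that is not identically zero yet must vanish somewhere in the interior (choose $c$ so that equality of the normalized boundary integrals is attained, or so that $u-cw$ changes sign). But an eigenfunction that vanishes at an interior point, being harmonic and of one sign by the first part, contradicts the strong maximum principle unless it is identically zero. This contradiction shows the eigenspace is one-dimensional, i.e. $\sigma_1^t$ is simple.

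The main obstacle is the careful handling of the mixed boundary condition in the strong maximum principle and in the regularity step: the Hopf lemma must be applied on the Steklov boundary $\p B_2$ where the Robin-type condition $\partial_\Bn u_1^t=\sigma_1^t u_1^t$ holds, to rule out a minimum of $|u_1^t|$ occurring there, while on $\p B_1^t$ the Dirichlet condition already prescribes the value zero. One must verify that a first eigenfunction is genuinely nontrivial and that the passage from $|u_1^t|$ back to a signed eigenfunction is justified on the \emph{connected} annular domain $B_2\setminus\overline{B_1^t}$. Since these are by now classical facts for the first eigenvalue of a self-adjoint elliptic problem with positive-definite Rayleigh quotient, and since the lemma is quoted directly from \cite{HLS:2022:FSD}, I would keep the argument brief and cite \cite{HLS:2022:FSD} for the full details, emphasizing the maximum-principle mechanism rather than reproducing every estimate.
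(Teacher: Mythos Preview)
The paper does not prove this lemma at all; it is simply quoted from \cite{HLS:2022:FSD} with no argument given, so there is no ``paper's own proof'' to compare against. Your variational/strong-maximum-principle sketch is the standard route and is correct in substance, and is almost certainly what the cited reference does.

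Two small tightenings are worth making. In the simplicity step, the cleanest choice of $c$ is to fix an interior point $x_0$ and set $c=u(x_0)/w(x_0)$, so that $u-cw$ is a first eigenfunction vanishing at $x_0$; your parenthetical about ``normalized boundary integrals'' is vaguer than necessary. Also, the Hopf lemma on $\partial B_2$ is not actually needed for the statement as written: the lemma asserts sign-definiteness only in the \emph{open} annulus $B_2\setminus\overline{B_1^t}$, and for that the interior strong maximum principle applied to the harmonic, nonnegative, nontrivial function $|u_1^t|$ already gives $|u_1^t|>0$ throughout the interior.
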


Now, we show that $u_1^t$ in bispherical coordinates depends only on $\xi,\theta$ as follows. 
\begin{lemma}\label{lemm:simplify u_1^t}
%Let $\sigma_1^t$ and $u_1^t$ be the first Steklov--Dirichlet eigenvalue and the associated eigenfunction on $\Om=B_2\setminus{\overline{B_1^t}}$. Then
The first eigenfunction $u_1^t$ depends only on $\xi$ and $\theta$, that is, it holds for some smooth function $A_0(\xi,\theta)$ that
\begin{align}\label{simplify u_1^t}
u_1^t=(\cosh\xi-\cos\theta)^{\frac{n}{2}}\cdot A_0(\xi,\theta).
\end{align}

\end{lemma}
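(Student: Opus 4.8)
The plan is to exploit the simplicity of $\sigma_1^t$ established in Lemma~\ref{lemm:positive} together with the rotational symmetry of the domain $\Om$ about the $x_1$-axis. The key observation is that the annulus $\Om=B_2\setminus\overline{B_1^t}$ is invariant under every rotation fixing the $x_1$-axis, and in bispherical coordinates such a rotation acts only on the angular variables $\varphi_1,\dots,\varphi_n$ while leaving $\xi$ and $\theta$ untouched. This is because, by construction \eqnref{bihyperspherical coordinates}, the coordinate $\xi$ is constant on each boundary sphere and $\theta$ parametrizes the "latitude," whereas $\varphi_1,\dots,\varphi_n$ parametrize the sphere $S^{n}$ of directions orthogonal to $\Be_1$. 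First I would make this precise: given a rotation $R\in SO(n+1)$ acting on the last $n+1$ coordinates, the composition $u_1^t\circ R$ is again harmonic in $\Om$, vanishes on $\p B_1^t$, and satisfies the same Steklov condition on $\p B_2$, since both the Laplacian and the normal derivative are rotation-invariant and $R$ preserves $\Om$.

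Next I would invoke the simplicity of the first eigenvalue. Since $u_1^t\circ R$ solves exactly the eigenvalue problem \eqnref{problem} with the same eigenvalue $\sigma_1^t$, and since the eigenspace for $\sigma_1^t$ is one-dimensional by Lemma~\ref{lemm:positive}, there must be a scalar $c(R)$ with $u_1^t\circ R=c(R)\,u_1^t$. A standard argument shows $R\mapsto c(R)$ is a one-dimensional representation of the connected group $SO(n+1)$; because $SO(n+1)$ has no nontrivial characters (it equals its own commutator subgroup, or equivalently has trivial abelianization for $n\ge1$), we conclude $c(R)\equiv 1$. Hence $u_1^t$ is invariant under all such rotations, which means that, as a function of the bispherical coordinates, it does not depend on $\varphi_1,\dots,\varphi_n$ and depends only on $\xi$ and $\theta$. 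An alternative, more elementary route that avoids representation theory is to use Lemma~\ref{lemm:positive}'s assertion that $u_1^t$ does not change sign: then the spherical average $\bar u(\xi,\theta):=\fint_{S^n}u_1^t\,d\varphi$ is itself a nonzero eigenfunction (averaging commutes with the rotation-invariant problem), and simplicity forces $u_1^t$ to be a scalar multiple of $\bar u$, which is manifestly $\varphi$-independent.

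Having shown that $u_1^t=u_1^t(\xi,\theta)$, I would then recall from section~\ref{sec: the first: bispherical coordinates} that the bispherical coordinates permit $R$-separation of the Laplace equation through the prefactor $(\cosh\xi-\cos\theta)^{n/2}$. Writing
\[
A_0(\xi,\theta):=(\cosh\xi-\cos\theta)^{-\frac{n}{2}}\,u_1^t(\xi,\theta),
\]
which is well defined and smooth on the closed coordinate rectangle $\xi_2\le\xi\le\xi_1$ since $\cosh\xi-\cos\theta>0$ there, immediately yields the claimed form \eqnref{simplify u_1^t}. Smoothness of $A_0$ follows from interior elliptic regularity for the harmonic function $u_1^t$ together with the smoothness and nonvanishing of the scale factor $h(\xi,\theta)$ from \eqnref{normal:u:3d} on the relevant region.

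The main obstacle I anticipate is the passage from "same eigenvalue" to "same eigenfunction up to scalar," i.e. making the rotation-invariance rigorous. One must be careful that $R$ genuinely maps $\Om$ to itself (this relies on the centers of $B_1^t$ and $B_2$ lying on the $x_1$-axis, so that the rotation axis passes through both centers) and that the boundary conditions transform correctly. The representation-theoretic step is clean but requires the fact that $SO(n+1)$ admits no nontrivial one-dimensional characters; the sign-preservation route circumvents this entirely and is likely the cleaner choice to record, since Lemma~\ref{lemm:positive} has already been established and the averaging argument is self-contained.
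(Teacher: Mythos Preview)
Your second route---the spherical averaging argument---is precisely what the paper does: it defines $v_1^t=\int_{\mathbb{S}^n}u_1^t\,d\mathbf{x}'$, verifies harmonicity via the bispherical Laplacian formula \eqnref{laplacian} together with the divergence theorem on $\mathbb{S}^n$, notes $v_1^t\not\equiv0$ by the sign condition in Lemma~\ref{lemm:positive}, and then invokes simplicity to conclude $u_1^t$ is a constant multiple of the $\varphi$-independent function $v_1^t$.

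One caution on your first route: the claim that $SO(n+1)$ equals its own commutator subgroup fails for $n=1$, since $SO(2)$ is abelian and has plenty of nontrivial (complex) characters. The conclusion $c(R)\equiv1$ still holds in that case, but you need a different justification---for instance, $c$ is a continuous homomorphism from the compact connected group $SO(2)$ into the real multiplicative group, so its image is a compact connected subgroup of $\mathbb{R}\setminus\{0\}$ and hence trivial. Since you already identify the averaging argument as the cleaner choice to record, and it coincides with the paper's proof, this is only a minor point.
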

\begin{proof}
We can simplify the parametrization $\mathbf{x}(\xi, \theta, \varphi_1,\dots,\varphi_n):\mathbb{R}\times [0,\pi]\times [0,\pi]^{n-1}\times [0,2\pi)\rightarrow \mathbb{R}^{n+2}$ as $\mathbf{x}(\xi, \theta, \mathbf{x}'):\mathbb{R}\times [0,\pi]\times \mathbb{S}^{n}\rightarrow \mathbb{R}^{n+2}$. Let 
\begin{align}
v_1^t(\mathbf{x}):=\int_{\mathbb{S}^{n}}u_1^t(\xi, \theta, \mathbf{x}')d\mathbf{x}'
\end{align}  
with $d\mathbf{x}'=\sin^{n-1}\varphi_1\cdots\sin\varphi_{n-1}d\varphi_1\cdots d\varphi_{n}$. By Lemma \ref{lemm:positive}, $v_1^t$ is nonzero. Since $v_1^t$ is independent of $\mathbf{x}'$,  we have from \eqnref{laplacian} that
\begin{align}
\Delta v_1^t=\int_{\mathbb{S}^n}\frac{1}{\sqrt{|\mathbf{g}|}}&\left[\frac{\partial}{\partial \xi}\left(\frac{\alpha^{n}\sin^{n}\theta \sin^{n-1}\varphi_1\cdots \sin\varphi_{n-1}}{(\cosh\xi-\cos\theta)^{n}}\frac{\partial u_1^t}{\partial \xi} \right)\right. \notag\\
  &  \left. +\frac{\partial}{\partial \theta}\left(\frac{\alpha^{n}\sin^{n}\theta \sin^{n-1}\varphi_1\cdots \sin\varphi_{n-1}}{(\cosh\xi-\cos\theta)^{n}}\frac{\partial u_1^t}{\partial \theta} \right) \right]d\mathbf{x}'. \label{eq:v_1^t 1}
   \end{align}
On the other hand, by the divergence theorem, $\int_{\mathbb{S}^n}\Delta_{\mathbb{S}^n}u_1^t d\mathbf{x}'=0$,  where $\Delta_{\mathbb{S}^n}$ means the Laplace--Beltrami operator on the unit sphere $\mathbb{S}^n$. This implies that
 \begin{align}
 \int_{\mathbb{S}^n}\sum_{i=1}^{n}\frac{1}{\sqrt{|\mathbf{g}|}}\frac{\partial}{\partial \varphi_i}\left(\sqrt{|\mathbf{g}|}g^{i+2, i+2}\frac{\partial u_1^t}{\partial \varphi_i} \right)d\mathbf{x}'=\frac{1}{g_{33}}\int_{\mathbb{S}^n}\Delta_{\mathbb{S}^n}u_1^td\mathbf{x}'=0. \label{eq:v_1^t 2}
   \end{align}
   By (\ref{eq:v_1^t 1}) and (\ref{eq:v_1^t 2}) together with (\ref{laplacian}), $\Delta v_1^t=\int_{\mathbb{S}^n}\Delta u_1^td\mathbf{x}'=0$. The last equality follows from the fact that $\Delta u_1^t(\mathbf{x})=0$ for $\mathbf{x}\in\Om$. In addition, it is easy to check that $v_1^t$ satisfies the Steklov--Dirichlet boundary conditions in \eqnref{problem}. Thus, by Lemma \ref{lemm:positive}, $v_1^t$ is $u_1^t$ up to a constant. Since $v_1^t$ only depends on $\xi$ and $\theta$, so does $u_1^t$ and the proof is complete.
\end{proof}

%Furthermore, $A_0(\xi, \theta)$ satisfies the following identity.
From the fact that $u_1^t$ is harmonic in $\Om$, we have the following relation for $A_0(\xi,\theta)$. 

\begin{lemma}\label{lemm:A_0 identity}
Set $s=\cos\theta$, then $A_0(\xi, \theta)$ in \eqnref{simplify u_1^t} satisfies that
    \begin{align} \label{eq:A_0 identity}
        \frac{\partial^2A_0}{\partial \xi^2}+(1-s^2)\frac{\partial^2 A_0}{\partial s^2}-(n+1)s\frac{\partial A_0}{\partial s}-\frac{n^2}{4}A_0=0.
    \end{align}

\end{lemma}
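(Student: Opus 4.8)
The plan is to insert the factored form \eqref{simplify u_1^t} into the harmonicity condition $\Delta u_1^t=0$ and carry out the $R$-separation explicitly. First I would use Lemma \ref{lemm:simplify u_1^t} to discard the $\varphi_i$-derivative terms in \eqref{laplacian}: since $u_1^t$ depends only on $(\xi,\theta)$, the sum $\sum_i\p_{\varphi_i}(\cdots)$ vanishes, and the common factor $\sin^{n-1}\varphi_1\cdots\sin\varphi_{n-1}$ together with the constant $\alpha^n$ can be divided out. What remains, after multiplying by $(\cosh\xi-\cos\theta)^{n+2}/(\alpha^{n+2}\sin^n\theta)$, is the reduced two-variable equation
\[
\p_\xi\!\left(\frac{\sin^n\theta}{(\cosh\xi-\cos\theta)^n}\,\p_\xi u_1^t\right)+\p_\theta\!\left(\frac{\sin^n\theta}{(\cosh\xi-\cos\theta)^n}\,\p_\theta u_1^t\right)=0,
\]
valid on the interior $\xi_2<\xi<\xi_1$ away from the poles $\theta\in\{0,\pi\}$.

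Then I would substitute $u_1^t=(\cosh\xi-\cos\theta)^{n/2}A_0$ and expand every derivative by the product rule, recording that $\p_\xi(\cosh\xi-\cos\theta)=\sinh\xi$ and $\p_\theta(\cosh\xi-\cos\theta)=\sin\theta$. Two clean cancellations drive the computation: the terms carrying $\p_\xi A_0$ that come from differentiating the prefactor cancel against those produced by the outer $\xi$-derivative, and likewise the $\p_\theta A_0$ terms generated when $\p_\theta$ hits the two powers $(\cosh\xi-\cos\theta)^{\pm}$ cancel — this is exactly the feature of the exponent $n/2$ that makes $R$-separation succeed. After dividing through by $\sin^n\theta\,(\cosh\xi-\cos\theta)^{-n/2}$, one is left with $\p_\xi^2A_0+\p_\theta^2A_0+n\cot\theta\,\p_\theta A_0$ plus a zeroth-order term whose coefficient is a rational expression in $\cosh\xi$ and $\cos\theta$.

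The step I expect to be the crux is simplifying that zeroth-order coefficient. Collecting the surviving $A_0$-terms yields a combination of $(\cosh\xi-\cos\theta)^{-2}(\sinh^2\xi+\sin^2\theta)$ with $(\cosh\xi-\cos\theta)^{-1}\cosh\xi$ and $(\cosh\xi-\cos\theta)^{-1}\cos\theta$. The key identity is
\[
\sinh^2\xi+\sin^2\theta=(\cosh\xi-\cos\theta)(\cosh\xi+\cos\theta),
\]
which lowers one power of $(\cosh\xi-\cos\theta)$; after this the coefficients of $\cosh\xi$ and of $\cos\theta$ combine to $-\tfrac{n^2}{4}(\cosh\xi-\cos\theta)$, so the entire zeroth-order term collapses to $-\tfrac{n^2}{4}A_0$. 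Finally I would pass to $s=\cos\theta$ using $\p_\theta=-\sin\theta\,\p_s$, which converts $\p_\theta^2A_0+n\cot\theta\,\p_\theta A_0$ into $(1-s^2)\p_s^2A_0-(n+1)s\,\p_s A_0$ and produces \eqref{eq:A_0 identity} on $\theta\in(0,\pi)$, extending to the poles by the smoothness of $A_0$. The only genuine difficulty is the careful bookkeeping of the many product-rule terms; once the factorization identity above is invoked, the reduction to $-\tfrac{n^2}{4}A_0$ is forced.
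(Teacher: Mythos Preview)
Your proposal is correct and follows essentially the same route as the paper: both substitute $u_1^t=(\cosh\xi-\cos\theta)^{n/2}A_0$ into the Laplacian \eqref{laplacian}, simplify the zeroth-order coefficient via the identity $\sinh^2\xi+\sin^2\theta=\cosh^2\xi-\cos^2\theta=(\cosh\xi-\cos\theta)(\cosh\xi+\cos\theta)$ to obtain $\p_\xi^2A_0+\p_\theta^2A_0+n\cot\theta\,\p_\theta A_0-\tfrac{n^2}{4}A_0=0$, and then change variables to $s=\cos\theta$. Your explicit remark that the first-order cross terms in $\p_\xi A_0$ and $\p_\theta A_0$ coming from the powers $(\cosh\xi-\cos\theta)^{\pm n/2}$ cancel is a nice articulation of the $R$-separation mechanism that the paper leaves implicit in its direct expansion.
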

\begin{proof}
For simplicity, we write $\Phi=\sin^{n-1}\varphi_1\cdots \sin \varphi_{n-1}$. By applying \eqnref{laplacian} to \eqnref{simplify u_1^t}, we have
    \begin{align*}
 &\Delta u_1^t \notag \\
        =  &\frac{1}{\sqrt{|\mathbf{g}|}} 
         \left[\frac{\partial}{\partial \xi}\left(\frac{\alpha^{n}\sin^{n}\theta \,\Phi}{(\cosh\xi-\cos\theta)^{n}}\left(\frac{n}{2}(\cosh \xi-\cos \theta)^{\frac{n}{2}-1}(\sinh \xi) A_0+(\cosh\xi-\cos\theta)^{\frac{n}{2}}\frac{\partial A_0}{\partial \xi}\right) \right)\right. \notag\\
  & \qquad \left. +\frac{\partial}{\partial \theta}\left(\frac{\alpha^{n}\sin^{n}\theta\, \Phi}{(\cosh\xi-\cos\theta)^{n}}\left(\frac{n}{2}(\cosh \xi-\cos \theta)^{\frac{n}{2}-1}(\sin \theta) A_0+(\cosh\xi-\cos\theta)^{\frac{n}{2}}\frac{\partial A_0}{\partial \theta}\right) \right) \right]
      \end{align*}
and, thus,
\begin{align*}
&\Delta u_1^t \\
=&\frac{(\cosh \xi-\cos \theta)^{n+2}}{\alpha^{n-2}\sin^n\theta\times \Phi}
\left[\left(-\frac{n}{2}\left(\frac{n}{2}+1\right)\frac{\alpha^n \sin^n\theta\times \Phi\times (\sinh\xi)^2}{(\cosh\xi-\cos\theta)^{\frac{n}{2}+2}}+\frac{n}{2}\frac{\alpha^n \sin^n\theta\times \Phi\times (\cosh\xi)}{(\cosh\xi-\cos\theta)^{\frac{n}{2}+1}}\right)A_0\right. \notag\\
 & \qquad\qquad+\left.\left(-\frac{n}{2}\left(\frac{n}{2}+1\right)\frac{\alpha^n \sin^n\theta\times \Phi\times (\sin\theta)^2}{(\cosh\xi-\cos\theta)^{\frac{n}{2}+2}}+\frac{n(n+1)}{2}\frac{\alpha^n \sin^n\theta\times \Phi\times (\cos\theta)}{(\cosh\xi-\cos\theta)^{\frac{n}{2}+1}}\right)A_0\right. \\
  &\qquad\qquad\left.+\frac{\alpha^n \sin^n\theta\times\Phi}{(\cosh\xi-\cos\theta)^{\frac{n}{2}}}\frac{\partial^2A_0}{\partial \xi^2}+\frac{n\alpha^n \sin^{n-1}\theta\cos\theta\times\Phi}{(\cosh\xi-\cos\theta)^{\frac{n}{2}}}\frac{\partial A_0}{\partial \theta}+\frac{\alpha^n \sin^n\theta\times\Phi}{(\cosh\xi-\cos\theta)^{\frac{n}{2}}}\frac{\partial^2A_0}{\partial \theta^2} \right].
\end{align*}
We derive 
\begin{align*}
0=&\frac{\alpha^2} {(\cosh \xi-\cos \theta)^{\frac{n}{2}+2}}\, \Delta u_1^t \\
    =&
      \left(-\frac{n}{2}\left(\frac{n}{2}+1\right)\frac{\sinh^2\xi+\sin^2\theta}{(\cosh\xi-\cos\theta)^2}+\frac{n}{2}\frac{\cosh\xi}{\cosh\xi-\cos\theta}+\frac{n(n+1)}{2}\frac{\cos\theta}{\cosh\xi-\cos\theta} \right)A_0 \\
  & +\frac{\partial^2A_0}{\partial \xi^2}+n\cot\theta\frac{\partial A_0}{\partial\theta} +\frac{\partial^2 A_0}{\partial \theta^2}
  =    \frac{\partial^2A_0}{\partial \xi^2}+\frac{\partial^2A_0}{\partial \theta^2}+n\cot\theta\frac{\partial A_0}{\partial \theta}-\frac{n^2}{4}A_0,
    \end{align*}
by using the relation that $\sinh^2\xi+\sin^2\theta=\cosh^2\xi-\cos^2\theta$.
Hence, we prove (\ref{eq:A_0 identity}). 
\end{proof}

We express the first eigenfunction using the Gegenbauer polynomials with $\lambda=\frac{n}{2}$, as follows. 
\begin{prop}\label{prop:u_1^t seriesexp}
Set $\mathbf{x}=\mathbf{x}(\xi,\theta,\varphi_1,\dots, \varphi_{n})\in\Om\subset\RR^{n+2}$, $n\geq 1$ as in Section \ref{sec: the first: bispherical coordinates}. The first eigenfunction $u_1^t$ admits the series expression
\begin{align} \label{u_1^t seriesexp}
u_1^t\left(\mathbf{x}\right)=(\cosh\xi-\cos\theta)^{\frac{n}{2}}\,\sum_{m={0}}^\infty & C_m\left( e^{(m+\frac{n}{2})(2\xi_1-\xi)}-e^{(m+\frac{n}{2})\xi} \right)G_{m}^{\left(n/2\right)}(\cos\theta)
\end{align} 
with some constant coefficients $C_m$. 
\end{prop}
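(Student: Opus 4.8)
The plan is to combine separation of variables with the completeness of the Gegenbauer polynomials. By Lemma \ref{lemm:simplify u_1^t} we may write $u_1^t=(\cosh\xi-\cos\theta)^{n/2}A_0(\xi,\theta)$, and by Lemma \ref{lemm:A_0 identity} the smooth function $A_0$ solves \eqref{eq:A_0 identity} on the strip $\xi_2<\xi<\xi_1$, $s=\cos\theta\in[-1,1]$. Since $\{G_m^{(n/2)}\}_{m\ge 0}$ is complete and orthogonal in $L^2([-1,1];(1-s^2)^{(n-1)/2}\,ds)$, the idea is to expand, for each fixed $\xi$, the smooth function $s\mapsto A_0$ in this basis: $A_0(\xi,\theta)=\sum_{m\ge 0}\Xi_m(\xi)\,G_m^{(n/2)}(\cos\theta)$, where the coefficients $\Xi_m(\xi)=\|G_m^{(n/2)}\|_{(n-1)/2}^{-2}\int_{-1}^1 A_0\,G_m^{(n/2)}(s)(1-s^2)^{(n-1)/2}\,ds$ are smooth in $\xi$ by differentiation under the integral sign.

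The next step is to show each $\Xi_m$ satisfies a simple ODE. Multiplying \eqref{eq:A_0 identity} by $G_m^{(n/2)}(s)(1-s^2)^{(n-1)/2}$ and integrating over $s\in[-1,1]$, the term $\partial_\xi^2 A_0$ produces $\Xi_m''(\xi)\,\|G_m^{(n/2)}\|_{(n-1)/2}^2$, while the two middle terms form exactly the Gegenbauer operator $(1-s^2)\partial_s^2-(n+1)s\,\partial_s$ applied to $A_0$. Writing that operator in the self-adjoint form of \eqref{Gegen:ode2} with $\lambda=n/2$, namely $(1-s^2)^{(n-1)/2}[(1-s^2)\partial_s^2-(n+1)s\,\partial_s]A_0=\partial_s\big((1-s^2)^{(n+1)/2}\partial_s A_0\big)$, and integrating by parts twice, the boundary contributions vanish because of the weight $(1-s^2)^{(n+1)/2}$, and the operator transfers onto $G_m^{(n/2)}$, which by \eqref{Gegen:ode} returns the eigenvalue $-m(m+n)$. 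Collecting terms gives $\Xi_m''-m(m+n)\Xi_m-\tfrac{n^2}{4}\Xi_m=0$, that is, $\Xi_m''=(m+\tfrac{n}{2})^2\Xi_m$, whose general solution is $\Xi_m(\xi)=a_m e^{(m+n/2)\xi}+b_m e^{-(m+n/2)\xi}$.

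It then remains to impose the boundary condition. The Dirichlet condition $u_1^t=0$ on $\partial B_1^t=\{\xi=\xi_1\}$ together with $(\cosh\xi-\cos\theta)^{n/2}\neq 0$ forces $A_0(\xi_1,\theta)=0$ for all $\theta$, whence $\Xi_m(\xi_1)=0$ for every $m$ by orthogonality. Solving $a_m e^{(m+n/2)\xi_1}+b_m e^{-(m+n/2)\xi_1}=0$ gives $b_m=-a_m e^{2(m+n/2)\xi_1}$, so that $\Xi_m(\xi)=a_m\big(e^{(m+n/2)\xi}-e^{(m+n/2)(2\xi_1-\xi)}\big)$. Setting $C_m=-a_m$ yields precisely the claimed expansion \eqref{u_1^t seriesexp}.

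The main obstacle is analytic rather than algebraic: one must justify that the Gegenbauer expansion of the smooth function $A_0$ can be handled term by term — in particular that the coefficients $\Xi_m$ are twice differentiable in $\xi$, that projecting the PDE onto each basis element is legitimate, and that the resulting series reconstructs $A_0$ (and hence $u_1^t$) in a suitably strong sense. One should also record that only the polynomial, i.e. regular, separated solutions appear: the second, singular family of solutions of the Gegenbauer equation is automatically excluded by expanding directly in the complete orthogonal polynomial basis, consistent with the smoothness of $u_1^t$ along the axis $\theta\in\{0,\pi\}$.
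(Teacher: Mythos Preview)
Your proof is correct and follows essentially the same route as the paper: expand $A_0$ in the Gegenbauer basis, derive the ODE $\Xi_m''=(m+\tfrac{n}{2})^2\Xi_m$ for the coefficients, and fix the constants via the Dirichlet condition at $\xi=\xi_1$. The only difference is organizational---the paper justifies termwise $s$-differentiation by separately expanding $\partial_s A_0$ and $\partial_s^2 A_0$ and matching coefficients via \eqref{twointe_relations}, whereas you project the PDE onto each $G_m^{(n/2)}$ and integrate by parts using the self-adjoint form \eqref{Gegen:ode2}; both devices deliver the same coefficient ODE.
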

\begin{proof}
Since $u_1^t$ is smooth and $\xi>0$ on $\overline{\Om}$, we have $A_0=v\circ \mathbf{x}$ for some smooth function $v$ in Cartesian coordinates. Hence, $A_0(\xi,\theta)$ is smooth on $(\xi,\theta)\in (0,\infty)\times [0,\pi]$, which implies that 
$$\widetilde{A}_0(\xi,s):=A(\xi,\theta)\quad\mbox{with }s=\cos\theta$$ 
belongs to $L^2([-1,1];(1-s^2)^{n/2-1/2}\,ds)$ for each $\xi$. Hence, $\widetilde{A}_0(\xi, s)$ admits the Fourier--Gegenbauer series expansion:
\begin{gather} \notag
\widetilde{A}_0(\xi, s)=\sum_{m=0}^\infty a_m(\xi)\, G_m^{\left(n/2\right)}(s) ,\\\label{Fourier-Legendre}
a_m(\xi)=\frac{1}{\|G_{m}^{(n/2)}\|_{\frac{n}{2}-\frac{1}{2}}}\int_{-1}^1 \widetilde{A}_0(s) G_{m}^{(n/2)}(s)(1-s^2)^{n/2-1/2}ds
\end{gather}
with the norm $\|\cdot\|_{n/2-1/2}$ given in \eqnref{Gegen:norm}. 
On the other hand, in view of \eqnref{gegen:deri}, the first derivatives of the Gegenbauer polynomials are complete and orthogonal in 
$L^2([-1,1];(1-s^2)^{n/2+1/2}\,ds)$.
 Hence, $\frac{\partial A_0}{\partial s}$ admits the series expansion 
 \begin{gather}\notag
 \frac{\partial \widetilde{A}_0}{\partial s}=\sum_{m=1}^{\infty}b_m(\xi)\frac{d}{ds}G_m^{\left(n/2\right)}(s),\\\label{Fourier-Legendre:2}
 b_m(\xi)=\frac{1}{\|\frac{d}{ds}G_{m}^{(n/2)}\|_{\frac{n}{2}+\frac{1}{2}}}\int_{-1}^1 \pd{\widetilde{A}_0}{s}(s)\frac{d G_{m}^{(n/2)}}{ds}(s)(1-s^2)^{n/2+1/2}ds.
 \end{gather}
 
 From \eqnref{expan:gegen} and \eqnref{gegen:deri}, we have
 $$\frac{d}{ds}\left(\frac{d G_{m}^{(n/2)}}{ds}(s)(1-s^2)^{n/2+1/2}\right)
=g_{m}^{(n/2)}\, G_{m}^{(n/2)}(s)(1-s^2)^{n/2-1/2} $$
for some constant $g_{m}^{(n/2)}$ independent of $\widetilde{A}_0$. 
By integration by parts, it follows that
\beq\label{twointe_relations}
\int_{-1}^1 \pd{\widetilde{A}_0}{s}(s)\frac{d G_{m}^{(n/2)}}{ds}(s)(1-s^2)^{n/2+1/2}ds
=m(m+n)\int_{-1}^1 \widetilde{A}_0(s) G_{m}^{(n/2)}(s)(1-s^2)^{n/2-1/2}ds.
\eeq

%If $\widetilde{A}_0$ has only a finite number of $m$ with nonzero $a_m(\xi)$, then $a_m(\xi)=b_m(\xi)$ by the term-by-term differentiation. 
%Since $g_{m}^{(n/2)}$ is independent of $\widetilde{A}_0$,
From \eqnref{Fourier-Legendre}, \eqnref{Fourier-Legendre:2} and \eqnref{twointe_relations}, one can easily check that $a_m(\xi)=b_m(\xi)$.
Thus, we conclude (and similarly for the second derivative) that
\begin{align}\notag
    \frac{\partial \widetilde{A}_0}{\partial s}=\sum_{m=1}^{\infty}a_m(\xi)\frac{dG_m^{\left(n/2\right)}}{ds}(s)
    \quad\mbox{and}  \quad  \frac{\partial^2 \widetilde{A}_0}{\partial s^2}=\sum_{m=2}^{\infty}a_m(\xi)\frac{d^2G_m^{\left(n/2\right)}}{ds^2}(s).
\end{align}

We substitute (\ref{Fourier-Legendre}) into (\ref{eq:A_0 identity}) in Lemma \ref{lemm:A_0 identity}. Since $a_m(\xi)$ admits the integral expression \eqnref{Fourier-Legendre}, for which the integrand is a smooth function, $a_m(\xi)$ is twice differentiable. 
Using \eqnref{Gegen:ode}, we obtain that
\begin{align} \label{eq:a_m}
    \frac{\partial^2a_m}{\partial\xi^2}-\left(\frac{n}{2}+m\right)^2a_m=0,
\end{align}
which implies that $a_m(\xi)=C_{m1} e^{\left(m+\frac{n}{2}\right)\xi}+C_{m2} e^{-\left(m+\frac{n}{2}\right)\xi}$ for some constants $C_{m1}$ and $C_{m2}$ for each $m\ge0$. Since $A_0(\xi_1)=0$ by the Dirichlet boundary condition in \eqnref{problem}, $C_{m1}=-C_m$ and $C_{m2}=C_m\cdot e^{2\left(m+\frac{n}{2}\right)\xi_1}$ for some constant $C_m$. Therefore, we obtain (\ref{u_1^t seriesexp}).
\end{proof}

The eigenfunction $u_1^t$ is harmonic in $\Om=B_2\setminus\overline{B_1^t}$ and satisfies the Robin boundary condition with constant ratio $\sigma_1^t$ on the sphere $\p B_2$. One can extend $u_1^t$ across $\p B_2$ so that
the series expansion in (\ref{u_1^t seriesexp}) converges in the domain given by $\xi_2-\delta\leq \xi\leq \xi_1$ with some  $\delta>0$. From \eqnref{prop:u_1^t seriesexp} and \eqnref{Gegen:norm}, we have
\begin{align*}
&\sum_{m=0}^\infty C_m^2\left( e^{(m+\frac{n}{2})(2\xi_1-\xi)}-e^{(m+\frac{n}{2})\xi} \right)^2 \|G_m^{(n/2)}\|^2
<\infty\quad\mbox{for }\xi_2-\delta\leq\xi\leq \xi_1.
%=&\sum_{m=0}^\infty C_m^2\left( e^{(m+\frac{n}{2})(2\xi_1-\xi)}-e^{(m+\frac{n}{2})\xi} \right)^2 O(m^k)<\infty\quad\mbox{for }\xi_2-\delta\leq\xi\leq \xi_1.
\end{align*}
Thus, if $\xi$ is fixed in $[\xi_2-\delta, \xi_2)$, there exists a positive constant $L=L(\xi)$ such that 
\begin{align}\label{Cm:upperbound}
        \left|C_m\,\big( e^{\left(m+\frac{n}{2}\right)(2\xi_1-\xi)}-e^{\left(m+\frac{n}{2}\right)\xi}\big)\right| \|G_m^{\left(n/2\right)}\|\le L(\xi)\quad\text{for all }m\ge0.
\end{align}
For $\xi$ away from $\xi_1$, it holds that
$$\frac{1}{2}\le 1-e^{-2\left(m+\frac{n}{2}\right)(\xi_1-\xi)} \le1\quad\mbox{for sufficiently large }m.$$
Since $ \|G_m^{\left(n/2\right)}\|$ has a polynomial growth in $m$ (see \eqnref{Gegen:norm}) and $e^{\left(m+\frac{n}{2}\right)(2\xi_1-\xi)}-e^{\left(m+\frac{n}{2}\right)\xi}$ behaves like $e^{\left(m+\frac{n}{2}\right)(2\xi_1-\xi)}$ as $m\rightarrow\infty$ for $\xi$ away from $\xi_1$. From \eqnref{Cm:upperbound} with $\xi=\xi_2-\delta$, we obtain
\beq\label{Cm:uppperbound2}
\left|C_m\big( e^{(m+\frac{n}{2})(2\xi_1-\xi_2)}-e^{(m+\frac{n}{2})\xi_2}\big)\right|
=O\left(e^{-(m+\frac{n}{2})\frac{\delta}{2}}\right)\quad\mbox{as }m\rightarrow\infty.
\eeq

%The remains of this section derive relations for $C_m$ in (\ref{u_1^t seriesexp}). 

For simplicity, we introduce the following quantities.
\begin{notation}\label{not:coefficients} For each $m\ge 0$, we set
\begin{align}
\notag\ds\widetilde{C}_m&=C_m\big( e^{(m+\frac{n}{2})(2\xi_1-\xi_2)}-e^{(m+\frac{n}{2})\xi_2}\big),\\
\notag\ds c_m&=\left(\tanh\big(\big(m+\frac{n}{2}\big)(\xi_1-\xi_2)\big)\right)^{-1/2}\neq 0. 
\end{align}
\end{notation}

The first eigenfunction on $\p B_2$ can be expressed as 
\begin{align}\label{eq:final series expansion of u_1^t}
 u_1^t(\mathbf{x}(\xi_2,\theta,\dots,\varphi_{n-1}))=(\cosh\xi_2-\cos\theta)^{\frac{n}{2}}\,\sum_{m=0}^\infty \widetilde{C}_m \,G_m^{\left(n/2\right)}(\cos\theta).
\end{align}
Using (\ref{Cm:uppperbound2}) and (\ref{maximum bound}), we can apply the term-by-term differentiation of the of the series in (\ref{u_1^t seriesexp}) with $\xi$ on $\xi=\xi_2$ that converges to $\frac{\partial u_1^t}{\partial \xi}\big|_{\xi=\xi_2}$.
Using \eqnref{normal:u:3d}, \eqnref{eq:final series expansion of u_1^t} and the Steklov boundary condition in (\ref{problem}), we obtain a three-term recurrence relation for $\widetilde{C}_m$ as follows. 
\begin{lemma}\label{lemm: recursive}
    We have 
\beq\label{eq:recursion:Ctilde}
\begin{aligned}
&\big(-2\alpha\sigma_1^t-n\sinh\xi_2+nc_0^2\cosh\xi_2\big)\widetilde{C}_0 - nc_1^2\widetilde{C}_1 = 0,\\
&\big(-2\alpha\sigma_1^t-n\sinh\xi_2+(2m+n)c_m^2\cosh\xi_2\big)\widetilde{C}_m - mc_{m-1}^2\widetilde{C}_{m-1} -  (m+n)c_{m+1}^2\widetilde{C}_{m+1}=0,
\end{aligned}
\eeq
for all $m\ge 1$.
\end{lemma}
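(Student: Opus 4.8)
The plan is to impose the Steklov condition on $\p B_2$ directly on the series \eqnref{u_1^t seriesexp} and then read off the recurrence from the orthogonality of the Gegenbauer polynomials. Writing $u_1^t=(\cosh\xi-\cos\theta)^{n/2}\sum_m C_m f_m(\xi)G_m^{(n/2)}(\cos\theta)$ with $f_m(\xi)=e^{(m+n/2)(2\xi_1-\xi)}-e^{(m+n/2)\xi}$, I would first differentiate term by term in $\xi$ at $\xi=\xi_2$; this is legitimate because the decay estimate \eqnref{Cm:uppperbound2} together with the polynomial bound \eqnref{maximum bound} makes the differentiated series uniformly convergent, as observed above. Applying the normal-derivative formula \eqnref{normal:u:3d} with scale factor $h(\xi_2,\theta)=\alpha/(\cosh\xi_2-\cos\theta)$ and the Steklov condition $\p_\Bn u_1^t=\sigma_1^t u_1^t$ then produces a single identity between two Gegenbauer series on $\p B_2$.

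The first computational step is the logarithmic derivative $f_m'(\xi_2)/f_m(\xi_2)$. A short calculation gives $f_m'(\xi_2)/f_m(\xi_2)=-(m+\tfrac n2)\frac{e^{(m+n/2)(2\xi_1-\xi_2)}+e^{(m+n/2)\xi_2}}{e^{(m+n/2)(2\xi_1-\xi_2)}-e^{(m+n/2)\xi_2}}=-(m+\tfrac n2)\coth\!\big((m+\tfrac n2)(\xi_1-\xi_2)\big)=-(m+\tfrac n2)c_m^2$, which is precisely where the quantities $c_m$ from Notation \ref{not:coefficients} enter. Substituting this, using $C_mf_m(\xi_2)=\widetilde C_m$, and cancelling the common prefactor $(\cosh\xi_2-\cos\theta)^{n/2}$, I expect the boundary identity to reduce to
\[
\tfrac n2\sinh\xi_2\sum_m\widetilde C_m G_m^{(n/2)}-(\cosh\xi_2-\cos\theta)\sum_m (m+\tfrac n2)c_m^2\widetilde C_m G_m^{(n/2)}=-\alpha\sigma_1^t\sum_m\widetilde C_m G_m^{(n/2)},
\]
in which the only obstruction to reading off coefficients is the stray factor $\cos\theta$ multiplying the middle sum.

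The heart of the argument is to absorb that $\cos\theta=s$ using the three-term recurrence \eqnref{Gegen:recur} with $\lambda=n/2$, which I would rewrite as $sG_m^{(n/2)}(s)=\frac{(m+1)G_{m+1}^{(n/2)}(s)+(m+n-1)G_{m-1}^{(n/2)}(s)}{2m+n}$. The decisive simplification is that the prefactor $(m+\tfrac n2)$ cancels the denominator $2m+n=2(m+\tfrac n2)$ to leave a clean factor $\tfrac12$; after reindexing the two resulting sums ($m\mapsto m\mp1$) and collecting the coefficient of each $G_k^{(n/2)}$, completeness and orthogonality of $\{G_m^{(n/2)}\}$ in $L^2([-1,1];(1-s^2)^{n/2-1/2}\,ds)$ force every coefficient to vanish. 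Matching the coefficient of $G_m^{(n/2)}$ for $m\ge1$ gives the general recurrence in \eqnref{eq:recursion:Ctilde}, while the $G_0^{(n/2)}$ coefficient, which receives only the $G_{m-1}$-shifted contribution, yields the separate first relation. The main obstacle here is purely bookkeeping: getting the index shifts and the $m=0$ boundary term right, and tracking the $(\cosh\xi_2-\cos\theta)$ factor so that the $\sinh\xi_2$ and $\cosh\xi_2$ terms land on the correct coefficients. Once the recurrence is written in the form $sG_m^{(n/2)}=\cdots$, the remaining algebra is essentially forced.
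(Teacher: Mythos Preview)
Your proposal is correct and follows essentially the same route as the paper: compute $\partial_\xi u_1^t$ term by term at $\xi=\xi_2$ (justified by \eqnref{Cm:uppperbound2} and \eqnref{maximum bound}), use \eqnref{normal:u:3d} and the Steklov condition to obtain a single Gegenbauer-series identity, rewrite $\cos\theta\, G_m^{(n/2)}$ via the three-term recurrence \eqnref{Gegen:recur}, and then equate coefficients. The only cosmetic difference is that the paper first expands $\partial_\Bn u_1^t$ fully as a Gegenbauer series and then imposes $\partial_\Bn u_1^t=\sigma_1^t u_1^t$, whereas you impose the boundary condition first and then absorb the $\cos\theta$; the algebra is the same either way.
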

\begin{proof}

By (\ref{normal:u:3d}) and (\ref{u_1^t seriesexp}), we have
\begin{align} \label{normal derivative in series}
\notag &\frac{\partial u_1^t}{\partial \mathbf{n}}\Big|_{\partial B_2}
=-\frac{\cosh \xi_2-\cos\theta}{\alpha}\frac{\partial u_1^t}{\partial \xi}\Big|_{\xi=\xi_2}\\
\notag =&-\frac{(\cosh \xi_2-\cos \theta)^{\frac{n}{2}}}{\alpha}\left[\frac{n\sinh \xi_2}{2}\sum_{m=0}^{\infty} C_m \left(e^{\left(m+\frac{n}{2}\right)(2\xi_1-\xi_2)}-e^{\left(m+\frac{n}{2}\right)\xi_2}\right)G_m^{\left(n/2\right)}(\cos \theta)\right.\\
\notag
&\left.+(\cosh \xi_2-\cos \theta)\sum_{m=0}^{\infty}C_m\left(-\left(m+\frac{n}{2}\right)e^{\left(m+\frac{n}{2}\right)(2\xi_1-\xi_2)}-\left(m+\frac{n}{2}\right)e^{\left(m+\frac{n}{2}\right)\xi_2}\right)G_m^{\left(n/2\right)}(\cos \theta)\right] \\
=&-\frac{(\cosh \xi_2-\cos \theta)^{\frac{n}{2}}}{\alpha}\sum_{m=0}^{\infty}\left( \frac{n\sinh \xi_2}{2}\widetilde{C}_m -(\cosh \xi_2-\cos \theta)\left(m+\frac{n}{2}\right)c_m^2\widetilde{C}_m\right) G_m^{\left(n/2\right)}(\cos \theta).
\end{align}
We obtain from \eqnref{Gegen:recur} that $G_0^{\left(n/2\right)}(\cos \theta)=1$, $G_1^{\left(n/2\right)}(\cos \theta)=n\cos \theta$, and 
\beq \label{eq:gegenbauer polynomials}
\begin{aligned}
%G_0^{\left(n/2\right)}(\cos \theta)=1, G_1^{\left(n/2\right)}(\cos \theta)=n\cos \theta,\\
&\left(m+\frac{n}{2}\right)\cos \theta \cdot G_m^{\left(n/2\right)}(\cos \theta)\\
=&\frac{m+1}{2}G_{m+1}^{\left(n/2\right)}(\cos \theta)+\frac{m+n-1}{2}G_{m-1}^{\left(n/2\right)}(\cos \theta) \quad\text{for all }m\ge 1.
\end{aligned}
\eeq
Note that (\ref{eq:gegenbauer polynomials}) holds for $m\ge0$ by defining $G_{-1}^{\left(n/2\right)}(\cos\theta)=0$. We substitute (\ref{eq:gegenbauer polynomials}) for $m\ge 0$ into (\ref{normal derivative in series}) and obtain
\begin{align*}
& \frac{\partial u_1^t}{\partial \mathbf{n}}\Big|_{\partial B_2}
=-\frac{(\cosh \xi_2-\cos \theta)^{\frac{n}{2}}}{\alpha}\times\\
&\qquad  \sum_{m=0}^\infty \Big(\frac{n\sinh\xi_2}{2}\widetilde{C}_m-\cosh\xi_2\big(m+\frac{n}{2}\big)c_m^2\widetilde{C}_m+\frac{m}{2}c_{m-1}^2\tilde{C}_{m-1}
+\frac{m+n}{2}c_{m+1}^2\widetilde{C}_{m+1}\Big)G_m^{\left(n/2\right)}(\cos \theta).
\end{align*}
 Hence, we prove (\ref{eq:recursion:Ctilde}) by applying the Steklov boundary condition in \eqnref{problem},
 $\frac{\partial u_1^t}{\partial \mathbf{n}}=\sigma_1^tu_1^t$ on $\partial B_2$, and \eqnref{eq:final series expansion of u_1^t}. 
\end{proof}

\section{Asymptotic analysis}

In this section, we consider the case when the distance $\varepsilon:=r_2-r_1-t$ between the two boundary spheres, $\partial B_1^t$ and $\partial B_2$, is sufficiently small and observe asymptotic behavior of $\sigma_1^t$. 
If $\eps$ is sufficiently small, by \eqnref{def:alpha} and \eqnref{def:xi_j}, we have (see, for instance,  \cite{HLS:2022:FSD})
\beq\label{alpha:eps}
\begin{aligned}
\alpha&=r_{*}\sqrt{\varepsilon}+O(\varepsilon\sqrt{\varepsilon})\quad\mbox{with }r_{*}=\sqrt{\frac{2r_1r_2}{r_2-r_1}},\\
\xi_j &= \frac{1}{r_j}\alpha+O(\varepsilon\sqrt{\varepsilon})\quad\text{for $j=1,2$}.
\end{aligned}
\eeq

\subsection{Simplification of the recursive relation for the first eigenfunction}
We additionally introduce the notations:
\begin{notation}\label{not:S_m} We set
\begin{align}\notag
   R_m(\eps)&=\frac{c_m^2\widetilde{C}_m}{c_{m-1}^2\widetilde{C}_{m-1}},\quad m\geq 1,\\\notag
   S_m(\eps)&=-\frac{2\alpha\sigma_1^t+n\sinh\xi_2}{c_m^2(m+n)}+\frac{2m+n}{m+n}\cosh\xi_2,\quad m\geq 0.
   \end{align}
We also define
\begin{align*}
N_1(\eps)&=\inf\left\{m\,:\,R_m(\eps)=0\right\};
\quad N_1(\eps)=\infty\mbox{ if }R_m(\eps)\ne0\mbox{ for all }m\ge 1,\\
N_2(\varepsilon)&=\inf\Big\{m\,:\,S_m(\varepsilon)^2-\frac{4m}{m+n}\le 0\Big\};
\quad N_2(\eps)=\infty\mbox{ if }S_m(\varepsilon)^2-\frac{4m}{m+n}>0\mbox{ for all }m\ge 0.
\end{align*}
\end{notation}

The recursion relation (\ref{eq:recursion:Ctilde}) is equivalent to
\beq\label{eq:recursion:R_m}
\begin{cases}
\ds R_1=nS_0,\\
  \ds  R_{m+1}=-\frac{m}{m+n}\frac{1}{R_m}+S_m\quad&\mbox{for }1\le m<N_1.
\end{cases}
\eeq 
For sufficiently small $\eps$, $S_m(\eps)$ has the strict monotonicity in $m$ and admits a lower bound as in the following lemmas. 
\begin{lemma} \label{lemm:Sn}
There exists $\varepsilon_1>0$ such that, for any $\varepsilon\in(0,\varepsilon_1)$, 
 \begin{align}\notag
 0<S_m(\varepsilon)<S_{m+1}(\varepsilon) \quad\text{for } m\ge 0.     
 \end{align}
\end{lemma}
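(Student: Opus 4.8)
The plan is to treat $S_m(\eps)$ as an explicit function of $m$ and exploit the asymptotic expansions in \eqnref{alpha:eps}. First I would substitute the definitions of $c_m$ and the expansions $\alpha = r_*\sqrt\eps + O(\eps\sqrt\eps)$ and $\xi_j = \alpha/r_j + O(\eps\sqrt\eps)$ into
\[
S_m(\eps)=-\frac{2\alpha\sigma_1^t+n\sinh\xi_2}{c_m^2(m+n)}+\frac{2m+n}{m+n}\cosh\xi_2,
\]
recalling that $c_m^2=\big(\tanh((m+\tfrac n2)(\xi_1-\xi_2))\big)^{-1}$, so that $c_m^{-2}=\tanh((m+\tfrac n2)(\xi_1-\xi_2))\in(0,1)$. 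The key small quantity is $\xi_1-\xi_2=\alpha(\tfrac1{r_1}-\tfrac1{r_2})+O(\eps\sqrt\eps)=O(\sqrt\eps)$, which forces the hyperbolic tangent argument to be small for each fixed $m$; likewise $\sinh\xi_2=O(\sqrt\eps)$ and $\cosh\xi_2=1+O(\eps)$. I expect that to leading order the positive term $\tfrac{2m+n}{m+n}\cosh\xi_2\to\tfrac{2m+n}{m+n}\ge 1$ dominates, while the subtracted term is $O(\sqrt\eps)$ uniformly in the relevant range of $m$, yielding positivity of $S_m$ for all $\eps$ below some threshold $\eps_1$.

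For the monotonicity I would compute the forward difference $S_{m+1}(\eps)-S_m(\eps)$ and show it is positive. The dominant contribution should come from the explicit rational factor $\tfrac{2m+n}{m+n}=2-\tfrac{n}{m+n}$, which is strictly increasing in $m$ with positive increments, multiplied by $\cosh\xi_2\ge 1$; the remaining piece involving $c_m^{-2}=\tanh((m+\tfrac n2)(\xi_1-\xi_2))$ is itself increasing in $m$ (since $\tanh$ is increasing and the argument grows with $m$), so the subtracted term $\tfrac{2\alpha\sigma_1^t+n\sinh\xi_2}{(m+n)}c_m^{-2}$ must be controlled carefully — its monotonicity in $m$ is not immediately one-signed because $\tfrac1{m+n}$ decreases while $\tanh(\cdots)$ increases. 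I would handle this by bounding the total variation of that term by $O(\sqrt\eps)$ using $0<\tanh(\cdot)\le 1$ and $2\alpha\sigma_1^t+n\sinh\xi_2=O(\sqrt\eps)$, and comparing it against the fixed positive gap coming from $\tfrac{n}{m+n}-\tfrac{n}{m+1+n}>0$.

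The main obstacle will be obtaining these estimates \emph{uniformly in $m$}, since $S_m$ must be controlled for all $m\ge 0$ simultaneously, not just for bounded $m$. The delicate regime is large $m$ relative to $1/\sqrt\eps$: there $(m+\tfrac n2)(\xi_1-\xi_2)$ need no longer be small, so $c_m^{-2}=\tanh(\cdots)$ saturates near $1$ rather than being $O(\sqrt\eps)$. I would therefore split the argument into two ranges — a ``small-$m$'' regime where the tanh argument is small and Taylor expansion applies, and a ``large-$m$'' regime where $\tanh(\cdots)$ is bounded below away from $0$ — and verify both positivity and the sign of the difference in each regime, using in the large-$m$ range that the rational prefactor increments dominate while the subtracted term stays $O(\sqrt\eps)$ because its numerator $2\alpha\sigma_1^t+n\sinh\xi_2$ is uniformly $O(\sqrt\eps)$ regardless of $m$. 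Choosing $\eps_1$ small enough to make the $O(\sqrt\eps)$ error terms smaller than the fixed positive gaps then closes both claims.
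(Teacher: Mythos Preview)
Your positivity argument is fine: the subtracted term is uniformly bounded by $\frac{2\alpha\sigma_1^t+n\sinh\xi_2}{n}=O(\sqrt\eps)$ while $\frac{2m+n}{m+n}\cosh\xi_2\ge 1$, so $S_m>0$ for small $\eps$.

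The monotonicity argument, however, has a real gap. The increment of the rational factor is
\[
\frac{2(m+1)+n}{m+1+n}-\frac{2m+n}{m+n}=\frac{n}{(m+n)(m+1+n)},
\]
which is \emph{not} a fixed positive gap; it is $O(1/m^2)$. So your proposed comparison ``the $O(\sqrt\eps)$ variation of the subtracted term is beaten by this gap'' cannot close for large $m$, and your large-$m$ fix (``the subtracted term stays $O(\sqrt\eps)$'') compares the wrong quantities: you need the \emph{increment} of the subtracted term to be $o(1/m^2)$ or at least $O(\sqrt\eps)/m^2$, not merely $O(\sqrt\eps)$. Concretely, writing $A=2\alpha\sigma_1^t+n\sinh\xi_2$ and $T_m=\tanh((m+\tfrac n2)(\xi_1-\xi_2))$, one has
\[
S_{m+1}-S_m=\frac{n\cosh\xi_2+AT_m}{(m+n)(m+1+n)}-\frac{A(T_{m+1}-T_m)}{m+1+n},
\]
and the second piece is only $O(A(\xi_1-\xi_2))/(m+1+n)=O(\eps)/m$, which does not obviously lose to $O(1)/m^2$ without further work on the $\tanh$ difference in the transition regime $m\sim 1/\sqrt\eps$.

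The paper sidesteps all of this by interpolating: set $h(x)=\frac{2x+n}{x+n}\cosh\xi_2-\frac{A}{x+n}\tanh\big((x+\tfrac n2)(\xi_1-\xi_2)\big)$ so that $h(m)=S_m$, and compute $h'(x)$. The point is that differentiation automatically puts a common factor $\frac{1}{(x+n)^2}$ in front of \emph{both} contributions, and the single elementary inequality $y\operatorname{sech}^2 y\le 1$ (applied with $y=(x+\tfrac n2)(\xi_1-\xi_2)$) bounds the $\tanh$-derivative term uniformly in $x$. One gets
\[
h'(x)\ge \frac{1}{(x+n)^2}\Big[n-2\big(2\alpha\sigma_1^t+n\sinh\xi_2\big)+O(\alpha^3)\Big],
\]
which is positive for all $x\ge 0$ once $\eps$ is small, with no regime splitting. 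If you prefer to keep the discrete-difference route, you will in effect need the same inequality (via the mean value theorem on $\tanh$) to show $T_{m+1}-T_m\le (\xi_1-\xi_2)\operatorname{sech}^2\big((m+\tfrac n2)(\xi_1-\xi_2)\big)\le \frac{1}{m+n/2}$, which then makes both sides genuinely comparable at order $1/m^2$.
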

\begin{proof}
Note that
\beq \label{S_m}
 S_m(\eps)
 =\frac{2m+n}{m+n}\cosh\xi_2-\frac{2\alpha\sigma_1^t+n\sinh\xi_2}{m+n}\, \tanh\left(\left(m+\frac{n}{2}\right)(\xi_1-\xi_2)\right), \quad m\ge 0.
\eeq
Consider the function $h:[0, \infty)\rightarrow \mathbb{R}$ defined by
\begin{align}\notag
h(x)= \frac{2x+n}{x+n}\cosh \xi_2-\frac{2\alpha \sigma_1^t+n\sinh \xi_2}{x+n}\, \tanh\left((\xi_1-\xi_2)\left(x+\frac{n}{2}\right)\right);
\end{align}
then $h(m)=S_m$ for all $m\ge 0$ and
\begin{align*}
h'(x)=&\frac{n}{(x+n)^2}\cosh\xi_2+\left(2\alpha\sigma_1^t+n\sinh\xi_2\right)\\
&\times\left[\frac{1}{(x+n)^2}\tanh\left(\left(\xi_1-\xi_2\right)\left(x+\frac{n}{2}\right)\right)-\frac{\xi_1-\xi_2}{x+n}\operatorname{sech}^2\left(\left(\xi_1-\xi_2\right)\left(x+\frac{n}{2}\right)\right)\right].
\end{align*}
Using the relations $\tanh(y)\ge0$ and $y\operatorname{sech}^2(y)\le 1$ for $y>0$, we obtain
\begin{align}
\notag h'(x)&\ge\frac{1}{(x+n)^2}\left[n-\left(2\alpha\sigma_1^t+n\sinh\xi_2\right)\left(\frac{x+n}{x+n/2}\right)\right]\\
\label{eq:dS:lwbd:exact}&\ge\frac{1}{(x+n)^2}\left[n-2\left(2\alpha\sigma_1^t+\frac{n\alpha}{r_2}\right)+O(\alpha^3)\right].
\end{align}
Because $\sigma_1^t$ is bounded (see, for example, \cite[Theorem 1]{Seo:2021:SOP}) and $\alpha\to0$ as $\varepsilon\to0$, for sufficiently small $\varepsilon$, $h'(x)>0$ holds for all $x\in [0, \infty).$ Thus, $S_m(\varepsilon)<S_{m+1}(\varepsilon)$ for all $m\ge 0$. Furthermore, we can check that $0<S_0(\varepsilon)$ for sufficiently small $\varepsilon$. This finishes the proof.
\end{proof}
\begin{lemma} \label{lemm:S_m_bound}
There exists $\eps_2>0$ such that, for any $\eps\in(0, \eps_2)$, 
\begin{align}
    S_m(\eps)\ge\frac{m+n/2}{m+n}\left[2-\left(2\alpha\sigma_1^{t}+\frac{n\alpha}{r_2}\right)\left(\frac{\alpha}{r_1}-\frac{\alpha}{r_2}\right)+\left(\frac{\alpha}{r_2}\right)^2\right]+O(\alpha^3)\quad \mbox{for }m\ge 0,
\end{align}
 where $O(\alpha^3)$ is uniform in $m$. 
\end{lemma}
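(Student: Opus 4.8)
The plan is to bound $S_m$ from below by taming the single $m$-dependent transcendental factor $\tanh\!\big((m+\tfrac n2)(\xi_1-\xi_2)\big)$ through the elementary inequality $\tanh(y)\le y$, and then to expand the remaining hyperbolic quantities in powers of $\alpha$ by means of \eqnref{alpha:eps}. The decisive feature is that $\tanh(y)\le y$ converts the transcendental factor into the linear one $(m+\tfrac n2)(\xi_1-\xi_2)$, which then cancels against the $\frac{1}{m+n}$ already present and collapses all $m$-dependence into a bounded prefactor.

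First I would start from the closed form \eqnref{S_m},
\[
S_m(\eps)=\frac{2m+n}{m+n}\cosh\xi_2-\frac{2\alpha\sigma_1^t+n\sinh\xi_2}{m+n}\tanh\!\left(\left(m+\tfrac n2\right)(\xi_1-\xi_2)\right).
\]
The coefficient $2\alpha\sigma_1^t+n\sinh\xi_2$ is strictly positive, since $\sigma_1^t>0$, $\alpha>0$, and $\xi_2>0$ force $\sinh\xi_2>0$; moreover $\tanh\ge0$ on $[0,\infty)$. Applying $\tanh(y)\le y$ to the nonnegative argument $y=(m+\tfrac n2)(\xi_1-\xi_2)$ therefore gives
\[
S_m(\eps)\ge\frac{2m+n}{m+n}\cosh\xi_2-\frac{2\alpha\sigma_1^t+n\sinh\xi_2}{m+n}\left(m+\tfrac n2\right)(\xi_1-\xi_2).
\]
Writing $2m+n=2(m+\tfrac n2)$, both surviving terms carry the common factor $\frac{m+n/2}{m+n}$, which I factor out to obtain
\[
S_m(\eps)\ge\frac{m+n/2}{m+n}\Big[2\cosh\xi_2-\big(2\alpha\sigma_1^t+n\sinh\xi_2\big)(\xi_1-\xi_2)\Big].
\]
This is the crucial algebraic step, as the entire $m$-dependence now resides in the prefactor $\frac{m+n/2}{m+n}\in[\tfrac12,1)$ for $m\ge0$.

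Next I would expand the bracket using \eqnref{alpha:eps}. Because $\alpha\sim r_*\sqrt\eps$, the correction terms $O(\eps\sqrt\eps)$ in the formulas for $\xi_j$ are $O(\alpha^3)$, so $\xi_2=\frac{\alpha}{r_2}+O(\alpha^3)$ and $\xi_1-\xi_2=\frac{\alpha}{r_1}-\frac{\alpha}{r_2}+O(\alpha^3)$. Taylor expansion then gives $2\cosh\xi_2=2+(\alpha/r_2)^2+O(\alpha^3)$ and $n\sinh\xi_2=\frac{n\alpha}{r_2}+O(\alpha^3)$. Since $\sigma_1^t$ is bounded (see \cite[Theorem 1]{Seo:2021:SOP}), each of the factors $2\alpha\sigma_1^t+n\sinh\xi_2$ and $\xi_1-\xi_2$ is $O(\alpha)$, so multiplying them out and discarding the resulting $O(\alpha^4)$ cross terms yields
\[
2\cosh\xi_2-\big(2\alpha\sigma_1^t+n\sinh\xi_2\big)(\xi_1-\xi_2)=2+\Big(\tfrac{\alpha}{r_2}\Big)^2-\Big(2\alpha\sigma_1^t+\tfrac{n\alpha}{r_2}\Big)\Big(\tfrac{\alpha}{r_1}-\tfrac{\alpha}{r_2}\Big)+O(\alpha^3).
\]
Substituting this back, and using $\frac{m+n/2}{m+n}\le1$ to absorb the bracket's $O(\alpha^3)$ into one error term independent of $m$, completes the proof.

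The only point requiring genuine care is the uniformity in $m$: the claimed $O(\alpha^3)$ must hold with a constant not depending on $m$. This is precisely what the $\tanh(y)\le y$ reduction secures, for it confines all $m$-dependence to the uniformly bounded prefactor $\frac{m+n/2}{m+n}$. I would thus present the $\tanh(y)\le y$ estimate as the main step and regard the ensuing Taylor expansions as routine, checking only that the boundedness of $\sigma_1^t$ renders every discarded cross term $O(\alpha^4)$ uniformly in $m$.
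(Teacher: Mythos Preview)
Your argument is correct and matches the paper's proof essentially step for step: the paper also starts from \eqnref{S_m}, applies $\tanh(y)\le y$ (together with $\cosh(y)\ge 1+y^2/2$, which is your Taylor expansion of $2\cosh\xi_2$ phrased as an inequality), factors out $\frac{m+n/2}{m+n}$, and then invokes \eqnref{alpha:eps} to replace $\xi_2$, $\sinh\xi_2$, and $\xi_1-\xi_2$ by their leading $\alpha$-terms. Your explicit remark that the uniformity in $m$ is secured by the bounded prefactor $\frac{m+n/2}{m+n}$ is exactly the mechanism the paper relies on.
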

\begin{proof}
Since $\tanh(y)\le y$ and $\cosh(y)\ge 1+y^2/2$ for $y>0$, \eqnref{S_m} leads to
\begin{align*}
S_m(\eps)&\geq
\frac{2m+n}{m+n}\left(1+\frac{\xi_2^2}{2}\right)-\frac{2\alpha\sigma_1^t+n\sinh\xi_2}{m+n}\left(\left(m+\frac{n}{2}\right)(\xi_1-\xi_2)\right)\\
&=\frac{m+n/2}{m+n}\Big(2+\xi_2^2-\big(2\alpha\sigma_1^t+n\sinh\xi_2\big)(\xi_1-\xi_2)\Big).
\end{align*}
By applying \eqnref{alpha:eps}, we prove the lemma.
\end{proof}

By \eqnref{eq:recursion:R_m}, we have
$$f_m(R_m)=R_{m+1}\mbox{ with } f_m(x):=- \frac{m}{m+n}\frac{1}{x} + S_m,\quad 1\le m < N_1(\varepsilon),$$
where $f_m$ are functions from $\mathbb{R}\backslash\{0\}$ to $\mathbb{R}$. We denote by $L_m$ and $U_m$ ($L_m<U_m$) the two fixed points of $f_m$, that is, the solutions to $x^2 - S_m x + \frac{m}{m+n} = 0$. 
In other words, for $1\le m <N_2(\eps)$,
\begin{align}\label{eq:L_m and U_m}
  L_m=\frac{1}{2}\bigg(S_m-\sqrt{S_m^2-\frac{4m}{m+n}}\bigg)\quad\mbox{and}\quad U_m=\frac{1}{2}\bigg(S_m+\sqrt{S_m^2-\frac{4m}{m+n}}\bigg).
\end{align}

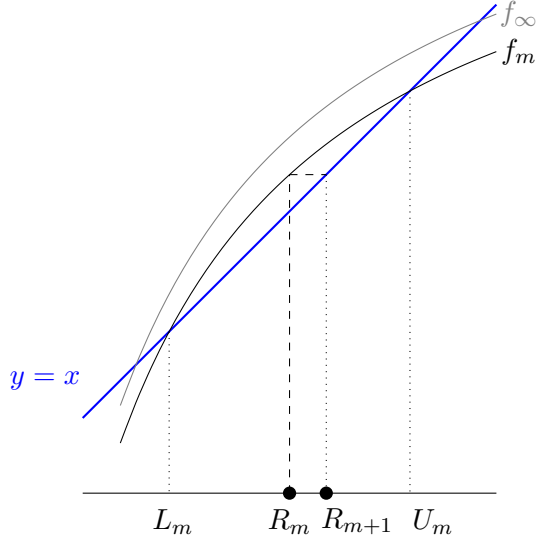
\begin{figure}
    \begin{center}
        \begin{tikzpicture}[scale=5]
\draw (0.5,0.3)--(1.6,0.3);
\draw[blue, thick] (0.5,0.5) --(1.6,1.6);
\node[blue] at (0.4,0.6) {$y=x$};

\draw plot[domain=0.6:1.6,smooth](\x,{-1/\x+2.1});
%\draw plot[domain=0.6:1.6,smooth](\x,{-1/\x+2.15});
\draw[gray] plot[domain=0.6:1.6,smooth](\x,{-1/\x+2.1999999976});

\node at (1.6+0.06, -1/1.6+2.1) {$f_{m}$};
\node[gray] at (1.6+0.06, -1/1.6+2.1999999976)  {$f_{\infty}$};

\draw[dotted] (1.37016,1.37016)--(1.37016,0.3)  ;
\node at (1.4316,0.225) {$U_{m}$};

\draw[dotted] (0.729844,0.729844)--(0.729844,0.3);
\node at (0.739844,0.225) {$L_{m}$};

\node at (1.050002, 0.225) {$R_m$};
\fill [black] (1.050002, 0.3) circle (0.5pt);

\draw[dashed] (1.050002, 0.3)--(1.050002, 1.1476208616);

\draw[dashed] (1.050002, 1.1476208616)--(1.1476208616, 1.1476208616);
\draw[dotted] (1.1476208616, 1.1476208616)--(1.1476208616, 0.3);
\node at (1.23, 0.225) {$R_{m+1}$};
\fill [black] (1.1476208616, 0.3) circle (0.5pt);
\end{tikzpicture}
    \end{center}
    
    \caption{Illustration of $R_{m}, R_{m+1}, L_m$ and $U_m$. The two points $(L_m, L_m)$ and $(U_m, U_m)$ are the intersections of $y=f_m(x)$ and $y=x$. Using the recursion relation $R_{m+1}=f_m(R_m)$, $(R_{m+1},0)$ can be obtained from $(R_m, 0)$ via the two graphs.}
    \label{fig:R_{m+1}, L_m, U_m}
\end{figure}

%\subsection{Properties of $L_m$ and $U_m$ assuming that $N_1(\eps)=N_2(\eps)=\infty$}
\subsection{Proof of Theorem \ref{thm:main}} \label{sec:main proof}
Fix $\varepsilon$ in  $(0,\eps_1)$, where $\eps_1$ is chosen as in Lemma \ref{lemm:Sn}. Then $S_m(\eps)>0$ for all $m\ge 0$. 
We temporarily assume that $N_1(\varepsilon)=N_2(\varepsilon)=\infty$. Then $L_m$ and $U_m$ are defined and 
\begin{align}\label{eq:0<L_m<R_m}
 0<L_m<U_m\quad\mbox{for each }m\ge 1.
\end{align}
Since $S_{\infty}:=\lim_{m\rightarrow \infty}S_m=2\cosh\xi_2$ by \eqnref{S_m}, we have
\beq\label{eq:LU:infty}
\begin{aligned}
L_\infty&:=\lim_{m\to\infty}L_m = e^{-\xi_2},\quad
U_\infty:=\lim_{m\to\infty}U_m = e^{\xi_2}.
\end{aligned}
\eeq
%Set
%$$L_\infty:=e^{-\xi_2}\quad\mbox{and}\quad U_\infty:=e^{\xi_2}.$$
Note that $(L_{\infty}, L_{\infty})$ and $(U_{\infty}, U_{\infty})$ are the intersections of $y=x$ and $y=f_{\infty}(x):=-\frac{1}{x}+S_{\infty}$.

\begin{lemma}\label{lemm:LnR:asymp:behav}
    Assume $N_1(\varepsilon)=N_2(\varepsilon)=\infty$. Then, we have
    \begin{align}\label{eq:limit of R_n}
        \lim_{m\rightarrow \infty}R_m=L_{\infty}.
    \end{align}
\end{lemma}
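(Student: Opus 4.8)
The plan is to treat \eqnref{eq:recursion:R_m} as a non-autonomous discrete dynamical system $R_{m+1}=f_m(R_m)$ and to show that, among all trajectories the recursion could produce, the one generated by our coefficients is pinned to the \emph{smaller} fixed point $L_m$. The mechanism that selects $L_\infty$ rather than $U_\infty$ is the convergence of the Fourier--Gegenbauer series: by Notation \ref{not:S_m}, $R_m=c_m^2\widetilde C_m/(c_{m-1}^2\widetilde C_{m-1})$, while \eqnref{Cm:uppperbound2} gives $\widetilde C_m\to0$ and $c_m\to1$ (see Notation \ref{not:coefficients}), so $D_m:=c_m^2\widetilde C_m\to0$. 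Consequently the ratio $R_m$ cannot converge to any value exceeding $1$; in particular the stable limit $U_\infty=e^{\xi_2}>1$ from \eqnref{eq:LU:infty} is excluded. Equivalently, \eqnref{eq:recursion:R_m} is the ratio form of the linear three-term recurrence $D_{m+1}-S_mD_m+\tfrac{m}{m+n}D_{m-1}=0$, and $\{D_m\}$ is its decaying (recessive) solution, whose successive ratios must approach the root of smaller modulus $L_\infty=e^{-\xi_2}$.

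The geometry of $f_m$ drives the squeeze. For $x>0$ one has $f_m'(x)=\tfrac{m}{m+n}x^{-2}>0$ and $f_m''(x)<0$, so $f_m$ is increasing and concave, with $f_m(x)>x$ on $(L_m,U_m)$ and $f_m(x)<x$ on $(0,L_m)\cup(U_m,\infty)$; moreover $f_m'(L_m)=U_m/L_m>1$ and $f_m'(U_m)=L_m/U_m<1$, so $L_m$ is repelling and $U_m$ attracting. I would then argue by a dichotomy for large $m$: if $R_M$ exceeds $L_M$ by a fixed amount, the iterates are driven upward and, using $L_m\to L_\infty$, $U_m\to U_\infty$ from \eqnref{eq:LU:infty} together with $S_m\uparrow S_\infty$ from Lemma \ref{lemm:Sn}, they accumulate near $U_\infty>1$, contradicting $D_m\to0$; if instead $R_M$ falls below $L_M$ by a fixed amount, the iterates decrease and are forced across $0$ in finitely many steps, contradicting $N_1(\eps)=\infty$. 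Hence $R_m-L_m\to0$, and since $L_m\to L_\infty$ by \eqnref{eq:LU:infty}, the claim \eqnref{eq:limit of R_n} follows.

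The principal obstacle is precisely that $L_\infty$ is the limit of the \emph{unstable} fixed points $L_m$: a generic trajectory of the autonomous map $f_\infty$ is repelled from $L_\infty$ toward the stable point $U_\infty$, so tracking $\{R_m\}$ onto the recessive branch is not automatic and must be enforced by the series-convergence input \eqnref{Cm:uppperbound2}. Making the two escape alternatives rigorous requires uniform-in-$m$ control of the drifting data $(S_m,L_m,U_m)$: because $f_m\to f_\infty$ and $L_m\to L_\infty$, $U_m\to U_\infty$ at a controlled rate, the non-autonomous system behaves like the autonomous one for large $m$, and I would quantify the ``pushing'' and ``crashing'' estimates by comparing $R_m$ with $L_m$ through the increment $R_{m+1}-L_{m+1}$ and the expansion factor read off from $f_m'(L_m)$. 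The monotonicity and positivity $0<S_m<S_{m+1}$ of Lemma \ref{lemm:Sn}, and the a priori bound $R_{m+1}=S_m-\tfrac{m}{m+n}R_m^{-1}<S_m\le S_\infty$ for $m\ge1$, which keeps the trajectory bounded above, are the technical ingredients that make the squeeze uniform.
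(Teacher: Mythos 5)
Your overall architecture coincides with the paper's: analyze the non-autonomous iteration $R_{m+1}=f_m(R_m)$ around the drifting fixed points $L_m<U_m$, and use convergence of the Fourier--Gegenbauer series to forbid the attracting value $U_\infty$. Your exclusion mechanism --- $D_m:=c_m^2\widetilde C_m\to 0$ by \eqnref{Cm:uppperbound2} together with $c_m\to 1$, so that the ratios $R_m=D_m/D_{m-1}$ cannot converge to anything of modulus greater than $1$ --- is a clean substitute for the paper's ratio test of the series \eqnref{eq:final series expansion of u_1^t} at $\theta=\pi/2$, and your ``pushing'' branch is essentially the paper's Case 1.

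The genuine gap is in your ``crashing'' branch. You claim that if $R_M$ falls below $L_M$ by a fixed amount, the iterates decrease and are ``forced across $0$ in finitely many steps, contradicting $N_1(\eps)=\infty$.'' Being forced across $0$ is indeed provable when $0<R_M<L_M-\delta$ (a positive decreasing orbit would converge to a limit in $[0,L_\infty-\delta]$; a positive limit would have to solve the limiting fixed-point equation, which has no root there, and a zero limit is impossible since $R_{m+1}=S_m-\frac{m}{m+n}R_m^{-1}\to-\infty$ as $R_m\to 0^+$). But crossing $0$ contradicts nothing: $N_1(\eps)=\infty$ only asserts that no $R_m$ equals $0$ exactly, i.e., no $\widetilde C_m$ vanishes, and the orbit crosses $0$ by jumping from a small positive value to a very negative one without ever hitting $0$. (Note also that if the ``fixed amount below $L_M$'' already puts $R_M<0$, then the iterates do not decrease at all, and your a priori bound $R_{m+1}<S_m$ fails, since it requires $R_m>0$.) What actually happens after the orbit goes negative is a rebound: for $R_k<0$ one has $R_{k+1}=S_k+\frac{k}{k+n}\left|R_k\right|^{-1}>S_k>U_k$, so the orbit lands above $U_k$ and, by the monotonicity of $f_m$ and $U_m\to U_\infty=e^{\xi_2}>1$ from \eqnref{eq:LU:infty}, stays bounded below by a constant greater than $1$ from then on; your own $D_m\to 0$ mechanism then yields the contradiction. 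This is exactly how the paper handles it (its Case 2 feeds into Case 1, and its Case 3 uses the monotone-limit argument rather than a clash with $N_1$). So your proof is repairable with tools you already have, but as written one of the two branches of your dichotomy terminates in a false contradiction.
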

\begin{proof} 
%We first prove that $\lim_{m\to\infty} R_m$ exists. If the limit is $L_\infty$, we are done.
Suppose that $R_m$ does not converge to $L_\infty$, that is, there exists a real number $\delta>0$ and a subsequence $R_{m_j}$ of $R_m$ satisfying $|R_{m_j}-L_\infty|>\delta$ for all $j$.
We now fix $\delta_0$ satisfying $0<\delta_0<\min\left(\delta,L_\infty,\frac{U_\infty-L_\infty}{2}\right)$.
By \eqnref{eq:LU:infty}, there exists $N\in\mathbb{N}$ such that
\beq\label{eq:local:uandl}
|U_m-U_\infty|<\delta_0,\quad |L_m-L_\infty|<\delta_0\quad\mbox{for all }m\ge N.
\eeq 
As we assume $N_1(\eps)=\infty$, $R_m$ is nonzero for all $m$. 
We consider the following three cases separately and show that $\lim_{m\rightarrow \infty}R_m$ exists.

\smallskip
\smallskip
\noindent {\bf Case 1} ($L_\infty+\delta_0<R_{k}$ for some $k\ge N$).
From the choice of $\delta_0$ and \eqnref{eq:local:uandl}, it holds that $L_k<R_k$. Hence we have (see Fig. \ref{fig:R_{m+1}, L_m, U_m})
\begin{align}\label{R_k}
R_k<R_{k+1}<U_k\quad\mbox{or}\quad U_k\le R_{k+1}\le R_k.    
\end{align}
In both cases, we have $L_\infty+\delta_0<R_{k+1}$ by the assumption and \eqnref{eq:local:uandl}. By induction, we have $L_\infty+\delta_0<R_m$ for all $m\ge k$, so we have $L_m<R_m$ for all $m\ge k$. In a similar argument as in (\ref{R_k}), we have   
\beq\label{ineq:Rnp1:Case1}
\mathrm{(i)}\ R_m<R_{m+1}<U_m\quad\mbox{or}\quad \mathrm{(ii)}\ U_m\le R_{m+1}\le R_m\quad\mbox{for all }m \ge k.
\eeq
If $\{R_m\}_{m\ge k}$ is a monotone sequence, then $\lim_{m\to\infty} R_m$ exists, because $\{U_m\}_{m\ge k}$ in \eqnref{ineq:Rnp1:Case1} converges to $U_{\infty}$. Otherwise, there exists $k_0\ge k$ such that either (i) holds for $m=k_0$ and (ii) holds for $m=k_0+1$; or (ii) holds for $m=k_0$ and (i) holds for $m=k_0+1$. Hence, it follows that
\beq\label{eq:Rm:notmonotone}
U_{k_0+1} \le R_{k_0+1} < U_{k_0}\quad\mbox{or}\quad U_{k_0} \le R_{k_0+1} < U_{k_0+1}.
\eeq
Then, from \eqnref{eq:local:uandl} and \eqnref{eq:Rm:notmonotone}, we have $|R_{k_0+1}-U_\infty|<\delta_0$.
From \eqnref{eq:local:uandl} and \eqnref{ineq:Rnp1:Case1}, we deduce that $|R_{m+1}-U_\infty|<\delta_0$ for all $m\ge k_0$.
Since we can choose $\delta_0$ to be arbitrarily small, $\lim_{m\to\infty} R_m$ exists and is equal to $U_{\infty}$.

\smallskip
\smallskip
\noindent{\bf Case 2} ($R_{k}<0$ for some $k\ge N$).
We have
$$R_{k+1}=f_k(R_{k})>S_{k}> U_k>U_\infty-\delta_0>L_\infty+\delta_0.$$
This reduces to Case 1.

\smallskip
\smallskip
\noindent{\bf Case 3} ($0<R_{k}<L_\infty-\delta_0$ for some $k\ge N$).
If $R_{m}<0$ for some $m\ge k$, the proof reduces to Case 2.
Thus, we may assume that $R_{m}>0$ for all $m\ge k$.
Because $0<R_{k}<L_\infty-\delta_0<L_k$, we have $$R_{k+1}=f_k(R_k)<R_k<L_\infty-\delta_0.$$
By induction, $\{R_m\}_{m\ge k}$ is a monotone decreasing sequence of positive numbers, so $\lim_{m\to\infty} R_m$ exists.

%
%\smallskip
%\smallskip
%\noindent{\bf Case 4} ($L_\infty-\delta\le R_k\le L_\infty+\delta$ for all $k\ge N$).
%If $R_m$ converges to $L_\infty$, we are done.
%Suppose that $R_m$ does not converge to $L_\infty$.
%In other words, there exists a $\delta_1$ satisfying $0<\delta_1<\delta$ and a subsequence $R_{m_j}$ of $R_m$ satisfying
%$$|R_{m_j}-L_\infty|>\delta_1\quad\mbox{for all }j\in\mathbb{N}.$$
%On the other hand, because \eqnref{eq:LU:infty} holds, there exists a $N_1\in\mathbb{N}$ such that for all $m\ge N_1$,
%\beq\label{eq:local:unl:2}
%|U_m-U_\infty|<\delta_1\quad\mbox{and}\quad|L_m-L_\infty|<\delta_1.
%\eeq
%Because $\lim_{j\to\infty}m_j=\infty$, $m_{j_0}\ge N_1$ for some $j_0\in\mathbb{N}$. From \eqnref{eq:local:unl:2}, we have $|R_{m_{j_0}}-L_\infty|>\delta_1$.
%Using the proof for Case 1 through Case 3 with $\delta$ and $N$ replaced by $\delta_1$ and $N_1$, we conclude that $\lim_{m\to\infty} R_m$ exists.

\smallskip

%Because of \eqnref{eq:coeffcond:regular}, the limit cannot be $U_\infty$, which proves part (b). 
From Case 1 through Case 3, we arrive at $\lim_{m\to\infty}R_m = R_\infty$ for some real number $R_\infty$.
Taking $m\to\infty$ on both sides of the recursion relation $R_{m+1}=f_m(R_m)$, we also have
$$R_\infty = -\frac{1}{R_\infty} + 2\cosh\xi_2,$$
which means that $R_\infty=U_\infty$ by the assumption.

Finally, we prove that $R_{\infty}\ne U_{\infty}$. From (\ref{eq:gegenbauer polynomials}) with $\theta=\frac{\pi}{2}$ and the fact that $G_1^{(n/2)}(0)=0$, we have 
$$G_{2m+2}^{\left(n/2\right)}(0)=-\frac{2m+n}{2m+2}\, G_{2m}^{\left(n/2\right)}(0),\quad G_{2m+1}^{\left(n/2\right)}(0)=0\quad\mbox{for all }m\ge1.$$
 Thus, the ratio test for the convergence of (\ref{eq:final series expansion of u_1^t}) at $\theta=\frac{\pi}{2}$ gives
\begin{align*}
1&\ge \limsup_{m\rightarrow \infty}\left|\frac{\widetilde{C}_{2m+2}\, G_{2m+2}^{\left(n/2\right)}(0)}{\widetilde{C}_{2m}\, G_{2m}^{\left(n/2\right)}(0)}\right|\\
&=\limsup_{m\rightarrow \infty}\left| \frac{c_{2m}^2}{c_{2m+2}^2}\,R_{2m+1}R_{2m+2}\, \frac{2m+n}{2m+2}\right|=|R_\infty|^2.
\end{align*}
Since $U_{\infty}=e^{\xi_2}>1$, we conclude that $R_\infty\ne U_\infty$. It contradicts the assumption.
Therefore, we obtain \eqnref{eq:limit of R_n}.
\end{proof}

\paragraph{Proof of Theorem \ref{thm:main}}
We only need to consider the case for $r_2<\frac{n+1}{n}r_1$.
We assume that \eqnref{ineq:sdeig:lowerbound} does not hold and will derive a contradiction to Lemma \ref{lemm:LnR:asymp:behav}, which proves the theorem. 

By negating \eqnref{ineq:sdeig:lowerbound}, there exists a constant $C$ satisfying $0<C<1$ and a sequence $\{\varepsilon_j\}_{j=1}^\infty$ converging $0$ such that 
$$\sigma_1^{t_j}<C\frac{(n+1)r_1-{n}r_2}{2r_2(r_2-r_1)}\quad\mbox{for all }j,$$
where $t_j:=r_2-r_1-\eps_j$.
From Lemma \ref{lemm:S_m_bound}, we have 
%$$  S_m(\eps)\ge\frac{m+n/2}{m+n}\left[2-\left(2\alpha\sigma_1^{t}+\frac{n\alpha}{r_2}\right)\left(\frac{\alpha}{r_1}-\frac{\alpha}{r_2}\right)+\left(\frac{\alpha}{r_2}\right)^2\right]+O(\alpha^3)$$
\begin{align}
\notag S_m\notag
&\ge \frac{m+n/2}{m+n}\left[2-2\alpha\sigma_1^{t}\left(\frac{\alpha}{r_1}-\frac{\alpha}{r_2}\right)+ \left(-\frac{n}{r_1r_2}+\frac{n}{r_2^2}+\frac{1}{r_2^2}\right)\alpha^2\right]+O(\alpha^3)\\
%&\ge\frac{m+n/2}{m+n}\left[2+\left(-\frac{1}{r_2^2}+\widetilde{C}\right)\alpha^2+\left(\frac{\alpha}{r_2}\right)^2\right]+O(\alpha^3)\\
\label{eq:S:contra:lwbd}&\ge\frac{m+n/2}{m+n}\left(2+\widetilde{C}\alpha^2\right)+O(\alpha^3)
\end{align}
with $\widetilde{C}:=(-C+1)\frac{{(n+1)}r_1- nr_2}{r_1r_2^2}>0$.
Therefore, we have
\beq\label{eq:D:contra:lwbd}
S_m^2-\frac{4m}{m+n}\ge\frac{n^2}{(m+n)^2}+\left(\frac{2m+n}{m+n}\right)^2\widetilde{C}\alpha^2+O(\alpha^3),
\eeq
which implies $$N_2(\varepsilon_j)=\infty\quad\mbox{for sufficiently large }j.$$
Also, we combine \eqnref{eq:S:contra:lwbd} and \eqnref{eq:D:contra:lwbd} to arrive at
\begin{align}\label{eq:U:lwbd}
U_m &=\frac{1}{2}\left(S_m+\sqrt{S_m^2-\frac{4m}{m+n}}\right)
\ge1+\frac{1}{4}\widetilde{C}\alpha^2+O(\alpha^3),\\    \label{eq:L:lessthan1}
L_m &=\frac{1}{2}\left(S_m-\sqrt{S_m^2-\frac{4m}{m+n}}\right)=\frac{m}{m+n}\frac{1}{U_m}<\frac{1}{U_m}\le 1-\frac{1}{4}\widetilde{C}\alpha^2+O(\alpha^3).
\end{align}
%We will prove that $N_1(\eps_j)=\infty$ for sufficiently large $j$ and that \eqnref{eq:U:lwbd} contradicts Lemma \ref{lemm:LnR:asymp:behav}.

From \eqnref{eq:recursion:R_m} and \eqnref{eq:S:contra:lwbd}, we have
$$R_1(\varepsilon_j)=nS_0(\varepsilon_j)\ge n\left(1+\frac{1}{2}\widetilde{C}\alpha(\varepsilon_j)^2\right)+O(\alpha(\varepsilon_j)^3).$$
Thus, there exists a positive integer $j_1$ satisfying $R_1(\varepsilon_j)>1$ for all $j>j_1$. From \eqnref{eq:L:lessthan1}, we can further assume that $L_\infty(\eps_j)=\lim_{n\rightarrow\infty}L_m(\eps_j)<1$ for all $j>j_1$. 
We prove that $R_m(\varepsilon_j)>1$ for all $m$ and $j>j_1$ as follows. 

Fix $j>j_1$ and suppose that $R_m=R_m(\varepsilon_j)>1$ for some $m>0$. We have the three cases:

\smallskip
\smallskip
\noindent{\bf Case 1} ($R_m>U_m$). It holds that $U_m<R_{m+1}<R_m$; see Case 1 in the proof of Lemma \ref{lemm:LnR:asymp:behav}. Because we have $U_m>1$ from \eqnref{eq:U:lwbd}, we conclude $R_{m+1}>1$.

\smallskip
\smallskip
\noindent{\bf Case 2} ($R_m=U_m$). It holds that $U_m=R_{m+1}=R_m$, so we have $R_{m+1}=R_m>1$.

\smallskip
\smallskip
\noindent{\bf Case 3} ($R_m<U_m$). Since $R_m>1$, we have $L_m<1<R_m<U_m$ by \eqnref{eq:L:lessthan1}. It follows that $R_m<R_{m+1}<U_m$; see Case 1 in the proof of Lemma \ref{lemm:LnR:asymp:behav}. Thus, we have $R_{m+1}>R_m>1$.

\smallskip
By induction, we have 
\beq\label{Rm:relation}
R_m(\varepsilon_j)>1>L_\infty(\varepsilon_j)\quad\mbox{for all }m\geq 1,\ j>j_1,
\eeq
which implies $N_1(\varepsilon_j)=\infty$. Recall that $N_2(\ep_j)=\infty$ for sufficiently large $j$. The relation \eqnref{Rm:relation} contradicts Lemma \ref{lemm:LnR:asymp:behav}. Therefore, we conclude that  \eqnref{ineq:sdeig:lowerbound} holds. 
%\qedsymbol
\qed

%%%%%%%%%%%%%%%%%

%\bibliographystyle{amsplain} 
%\bibliography{2023_HLS_Steklov}

\end{document}